\documentclass[36pt, a4paper]{article}
\usepackage{amsthm,amsmath,amsfonts,amssymb, hyperref,authblk}
\usepackage{blindtext}
\usepackage{lipsum}                     % Dummytext
\usepackage{xargs}                      % Use more than one optional parameter in a new commands
\usepackage[pdftex,dvipsnames]{xcolor}  % Coloured text etc.
\usepackage[colorinlistoftodos,prependcaption,textsize=tiny]{todonotes}
\newcommandx{\unsure}[2][1=]{\todo[linecolor=red,backgroundcolor=red!25,bordercolor=red,#1]{#2}}
\newcommandx{\change}[2][1=]{\todo[linecolor=blue,backgroundcolor=blue!25,bordercolor=blue,#1]{#2}}
\newcommandx{\info}[2][1=]{\todo[linecolor=OliveGreen,backgroundcolor=OliveGreen!25,bordercolor=OliveGreen,#1]{#2}}
\newcommandx{\improvement}[2][1=]{\todo[linecolor=Plum,backgroundcolor=Plum!25,bordercolor=Plum,#1]{#2}}
\newcommandx{\thiswillnotshow}[2][1=]{\todo[disable,#1]{#2}}
\newenvironment{itemizeminus}{\begin{itemize} }{\end{itemize}}
\newcommand{\assign}{:=}

\newcommand{\tmop}[1]{\ensuremath{\operatorname{#1}}}
\usepackage{xcolor}
\usepackage{bm}
\usepackage{cases}
\usepackage{booktabs} 
\usepackage{array}
\usepackage{siunitx}
\usepackage{scrtime}
\usepackage{mwe} % new package from Martin scharrer
\usepackage{caption}
\usepackage{graphics}
\usepackage{caption}
\usepackage{float}
\usepackage{stmaryrd}
\usepackage{xparse}
\usepackage{booktabs}
\usepackage[noend]{algorithmic} 
\usepackage{algorithm}
\usepackage{stmaryrd}
\usepackage[figuresright]{rotating}
%\usepackage{lineno}\linenumbers

%%%%%%%%%%%%%%%%%%%%%%%%%%%
\usepackage{vmargin}
\setmarginsrb{4cm}{1cm}{2cm}{1cm}{1cm}{1cm}{2cm}{4cm}
%\usepackage{showkeys}
%%%%%%%%%%%%%%%%%%%%%%%%%%%

%%%%%%%%%%%%%%%%%%%%%%%%%%%%%%%%%%

\renewcommand{\geq}{\geqslant}

\newcolumntype{L}[1]{>{\raggedright\arraybackslash}p{#1}}

%%%%%%%%%%%%%%%%%%%%%%%%%%%%%%%%%%%%%%%

\everymath{\displaystyle}

%%%%%%%%%%%%%%%%%%%%%%%%%%%%%%%%%%%%%%%%%%%%%%%%%%%%
\newtheorem{thm}{Theorem}[section]
\newtheorem{lemma}[thm]{Lemma}
\newtheorem{proposition}[thm]{Proposition}

\newtheorem{e-definition}[thm]{Definition}

\numberwithin{equation}{section}

\makeatletter
\newcommand\appendix@section[1]{%
\refstepcounter{section}%
\orig@section*{Appendix \@Alph\c@section: #1}%
}
\let\orig@section\section
\g@addto@macro\appendix{\let\section\appendix@section}
\makeatother
%%%%%%%%%%%%%%%%%%%%%%%%%%%%%%%%%%%%%%%%%%%

%%%%%%%%%%%%%%%%%%%%%%%%%%%%%%%%%%%%%%%%%%%
 \title{Stability Analysis of a Non-Separable Mean-Field Games for Pedestrian Flow in Large Corridors}     
%%%%%%%%%%%%%%%%%%%%%%%%%%%%%%%%%%%%%%%%%%%%

 \author{Mohamed Ghattassi \thanks{NYUAD Research Institute, New York University Abu Dhabi, PO Box 129188, Abu Dhabi, United Arab Emirates, {\sf mg6888@nyu.edu}} \quad Nader Masmoudi \thanks{Department of Mathematics, New York University in Abu Dhabi, Saadiyat Island, P.O. Box 129188, Abu Dhabi, United Arab Emirates-- Courant Institute of Mathematical Sciences, New York University, 251 Mercer Street, New York, NY 10012, USA, {\sf nm30@nyu.edu}} \quad  Eliot Pacherie \thanks{CNRS and CY Cergy Paris Université, 2 avenue Adolphe Chauvin, 95300 Pontoise, France {\sf eliot.pacherie@cyu.fr}}}

\date{\today}

\begin{document}                  
\maketitle   

\begin{abstract}
 %We investigate a generalized Hughes model for pedestrian flow in an infinitely large corridor. We demonstrate that constant flows are stable under a condition on the density. This model is a non-separable mean-field games, which has a backward/forward structure, and the questions of existence and stability of small perturbations of constant states were previously unanswered. Our findings suggest that when the density is below half of the maximal density \textit{$\dfrac{\rho_{m}}{2}$}, we can control the perturbation and prove stability results for the nonlinear Generalized Hughes model. However, due to wave propagation phenomena, we are unable to provide an answer about the stability results when the density is higher. Our approach involves constructing an explicit solution for the linear problem in Fourier analysis and demonstrating, through a fixed-point argument, how to construct the solution for the full nonlinear mean-field games system.

We investigate the existence and stability of small perturbations of constant states of the generalized Hughes model for pedestrian flow in an infinitely large corridor. We show that constant flows are stable under a condition on the density. Our findings indicates that when the density is less than half of the maximum density $\rho_{m}/2$, which is the Lasry-Lions monotonicity condition, we can control the perturbation and prove postive stability results for the nonlinear Generalized Hughes model. However, due to wave propagation phenomena, we are unable to provide an answer for stability results when the density is higher. Our approach involves constructing an explicit solution for the linear problem in Fourier analysis and demonstrating, through a fixed-point argument, how to construct the solution for the full nonlinear mean-field games system.

   \vspace{0.2cm}
\noindent {\bf Keywords:} Stability Analysis, Generalized Hughes model, Non-separable Mean-fields games.
%\vspace{0.2cm}
\end{abstract}

%%%%%%%%%%%%%%%%%%%%%%%%%%%%%%%%%%%%%%%%%%%
 \tableofcontents
%%%%%%%%%%%%%%%%%%%%%%%%%%%%%%%%%%%%%%%%%%%

\section{Introduction and presentation of the results}

\subsection{Model for a pedestrian flow in a large corridor}
In recent years, crowd motion modeling has become an active area of research. As the world population and urbanization continue to increase, we cannot underestimate the importance of understanding the behavior of human crowds. Nowadays, there is a lot of interdisciplinary research among sociologists, biologists, physicists, and mathematicians; crowd motion has become one of the emerging research topics.

In this paper, we analyze the stability of the Generalized Hughes model (GH for short), which is a class of non-separable Mean-field games (MFGs for short) systems describing pedestrian flow (see \cite{gm2023}). The GH model for pedestrian dynamics is based on a Kolmogorov-type equation (describing the evolution of crowd density $\rho$) and a Hamilton-Jacobi-Bellman equation (HJB for short) (providing the shortest weighted distance to an exit).
\begin{align}
\partial_t\rho - \nabla \cdot \left( \rho \mathcal{H}_{p}(x,y,\rho,\nabla \phi)\right) -\sigma \Delta  \rho= 0, &\qquad (x,y)\in\Omega, \quad t \in (0,T) \label{GH1}\\ 
-\partial_t\phi + \mathcal{H}(x,y,\rho,\nabla \phi) - \sigma \Delta  \phi=0, &\qquad  (x,y)\in\Omega, \quad t \in (0,T) \label{GH2}\\
\rho(x,y,t=0)= \rho_{0}(x,y),&\quad (x,y)\in\Omega \label{GHi1}\\ 
\phi(x,y,t=T)= \phi_{T}(x,y),&\quad (x,y)\in\Omega \label{GHi2}
\end{align}
where the viscosity $\sigma \geqslant 0$ corresponds to some noise effect, the non-separable Hamiltonian $\mathcal{H}$ is given by 
\begin{align*}
  \mathcal{H}:\mathbb{R} \times\mathbb{R} \times \mathbb{R}\times \mathbb{R}^2 &\longrightarrow \mathbb{R}& \\
 (x,y,\rho,p)&\longmapsto \frac{1}{2} f^{\beta}(\rho)|p|^{2} -\frac{1}{2} f^{2-\beta}(\rho),&
\end{align*}
$(x,y)\in \Omega$ denotes the position in space which can be a corridor of length $2 L > 0$ and height $2 h > 0$ defined by the set $(x, y) \in [- L,\,L] \times [- h,\, h]$, $t\in (0,T]$, $T>0$, the  horizon time, $\nabla$ and $\Delta$ are respectively the gradient and Laplacian with respect to the space variable $x$.  $\mathcal{H}$ is a convex function of the variable $p$, $\mathcal{H}_{p}$ stands for $\frac{\partial\mathcal{H}}{\partial p}(x, y,\rho, p)$. The system \eqref{GH1}-\eqref{GHi2} is a macroscopic model for crowd dynamics, in which individuals seek to minimize their travel time by avoiding/ or traveling to regions of high density. Here, traveling or avoiding regions of high density depends on the factor $\beta\in [0,2]$, as already shown in \cite{gm2023}.
 This model describes evacuation scenarios, where a group of people wants to leave a domain $\Omega \subset\mathbb{R}^{2}$ with one or several exits/doors and/or obstacles as fast as possible. The function $\rho$ corresponds to the pedestrian density, $\rho_{in}$ is the initial pedestrian density and $\phi$ is the weighted shortest distance to the exit. $f$ is a function introducing saturation effects, and we take here $f(\rho) \assign \rho_{m}-\rho$, where $\rho_{m}$ corresponds to the maximum scaled pedestrian density. The MFG system \eqref{GH1}-\eqref{GHi2} is supplemented with different boundary conditions for the
walls and exits.  We assume that the boundary $\partial \Omega$ of our domain is subdivided into three parts: inflow $\Gamma$, outflow $\Sigma$ and insulation $\Gamma_{a}=\partial \Omega / (\Gamma \cup\Sigma)$ with $\Gamma \cap \Sigma =\emptyset$. Here we assume that particles enter the domain $\Omega$ on $\Gamma$ with boundary pedestrian density $\rho_{b}$. On the remaining part of the boundary $\Gamma_{a}$, we impose no flux conditions 
\begin{align}
\rho = 0,&\qquad t \in (0,T), \quad (x,y)\in\Sigma \label{Boundrho1}\\ 
\left(\sigma\nabla\rho +\rho\mathcal{H}_{p}(x,y,\rho,\nabla \phi)\right)\cdot n(x,y)= 0,&\qquad t \in (0,T), \quad (x,y)\in\Gamma_{a} \label{Boundrho2}\\ 
 \rho=\rho_{b}, &\qquad t \in (0,T), \quad (x,y)\in\Gamma, \label{Boundar3}
\end{align}
where  $n(x,y)$ denotes the outer normal vector to the boundary, for the HJB-type equation we consider the following boundary conditions 
\begin{align}
\phi = 0,&\qquad t \in (0,T), \quad (x,y)\in\Sigma \label{Boundrho1}\\ 
\nabla \phi\cdot n(x,y)= 0,&\qquad t \in (0,T), \quad (x,y)\in\Gamma_{a} \label{Boundrho2}\\ 
 \phi=\phi_{b}, &\qquad t \in (0,T), \quad (x,y)\in\Gamma, \label{Boundar3}
\end{align}
%% ===============================================================================================
%% ===============================================================================================
  where the function $\phi$ takes zeros on the exit $\Sigma$. The MFG presentation of the pedestrian flow \eqref{GH1}-\eqref{GHi2} shown in \cite{gm2023}, is based on the pioneering work of R.L.Hughes, \cite{hughes2002continuum}. In \cite{hughes2002continuum}, the author introduces the macroscopic discerption treating the crowd dynamic as a fluid made of particles.  The model is given by a nonlinear conservation law with discontinuous flux, coupled with an Eikonal equation via a cost functional. The existence and uniqueness analysis for the Hughes model appears to be challenging.  However, we stress that for the unique viscosity solution of the eikonal equation, no more than Lipschitz continuity can be expected. These various difficulties motivated the development of several attempts to study, both analytically and numerically \cite{burger2014mean,carrillo2016improved,bruna2017cross}, the Hughes model and its regularized variants, and for a more detailed description of the model, we refer the reader to the recent survey on that topic \cite{amadori2023mathematical} and references therein. Recently, in \cite{gm2023}, the authors propose a new generalized Hughes model which is an intelligent fluid model  describing the congestion where pedestrian group are attracted by the high-density region or avoids . The idea is to introduce a new factor $\beta \in [0,2]$ into the Hughes model. With this new control parameter, people can either avoid or move towards high-density areas. Then, the authors reformulated the model within the framework of non-separable mean-field games (MFGs). They demonstrated the existence of weak solutions and analyzed the vanishing viscosity limit of these weak solutions. As previously mentioned in \cite{gm2023}, Hughes' derivation is rooted in the assumption that pedestrians cannot predict the population's evolution but choose their strategies solely based on the information available at time $t$, without anticipation. This same argument was employed to justify the use of the quasi-stationary Hamilton-Jacobi-Bellman equation, as discussed in \cite{mouzouni2020quasi} and \cite{camilli2023quasi}. For more comprehensive insights into the existence and uniqueness of solutions for the MFGs, we refer the reader to the course by P.L. Lions \cite{Lions2023}. For the existence of weak solutions,  see \cite{porretta2015weak} and \cite{achdou2018mean}, while for the existence of strong solutions, we can refer to \cite{gomes2015time0}, \cite{gomes2016time}, \cite{ambrose2018strong} and references therein.

 The stability of MFGs has been explored in several articles. It has been established that a monotonicity condition on the cost functions leads to uniqueness and stability of solutions, as well as the turnpike property and the convergence of solutions towards the stationary ergodic state. This was first proved in \cite{cardaliaguet2012long,cardaliaguet2013long} for quadratic separable Hamiltonian systems and in \cite{gomes2010discrete} for discrete-time finite-state systems. \cite{cardaliaguet2019long} and \cite{porretta2018turnpike} further discussed the long-term behavior in the case of smoothing couplings and uniformly convex Hamiltonian, and local couplings and globally Lipschitz Hamiltonian, respectively. \cite{cesaroni2021brake,cirant2019existence,cirant2018variational} examined the stable long-time behavior of MFGs with some monotonicity structure. \cite{cirant2021long} showed that the monotonicity of the cost functions can be relaxed by taking into account the diffusive character of the equations. Recently, \cite{mimikos2022regularity} removed the coercivity assumption used in \cite{munoz2022classical,munoz2023classical} for the one-dimensional case and characterized the long-time behavior of the solutions, showing that they satisfy the turnpike property with an exponential rate of convergence and that they converge to the solution of the infinite-horizon system. More recently, \cite{cesaroni2023stationary} provided some partial answers to the stability of the Kuramoto MFG system with strong interaction \cite{carmona2022synchronization}.

In the present paper, we will consider a non-separable MFGs for pedestrian flow addressing both scenarios: when pedestrians avoid high-density areas ($\beta=2$) and when pedestrians are attracted by high-density areas ($\beta=0$). However, the analysis remains applicable for $\beta\in (0,2)$. We begin by considering the second order MFGs system \eqref{GH1}-\eqref{GHi2} for $\beta=2$ as follows 
\begin{eqnarray}
  \partial_t \rho - \nabla . (\rho f^2 (\rho) \nabla \phi) - \sigma \Delta
  \rho & = & 0  \qquad (x,y)\in\Omega, \quad t \in (0,T)\label{maineq}\\
  \partial_t \phi - \frac{1}{2} f^2 (\rho) | \nabla \phi |^2 + \sigma
  \Delta \phi + \frac{1}{2} & = & 0 \qquad (x,y)\in\Omega, \quad t \in (0,T) \label{maineq2}\\
  \rho(x,y,t=0)&=& \rho_{0}(x,y),\quad (x,y)\in\Omega \label{GHi12}\\ 
\phi(x,y,t=T)&=& \phi_{T}(x,y),\quad (x,y)\in\Omega. \label{GHi22}
\end{eqnarray} 
 A particular solution of this problem is
\begin{equation}\label{ParSol}
 \rho (x, y, t) = \bar{\rho}, \,\,\phi (x, y, t) = \frac{x}{f (\bar{\rho})},
\end{equation}
for some constant $\rho_m > \bar{\rho} > 0$. This corresponds to a constant flow of pedestrians, all moving at the same speed. We are interested in the stability of this equation. That is, if our initial data is close to $\left( \bar{\rho}, \frac{x}{f (\bar{\rho})} \right)$, does this stay true for all time between $0$ and $T$ ? And does this stay true if $\sigma = 0$, that is without any viscosity effect? This poses many difficulties, as even the question of the well-posedness of such a problem is not understood. This is in part because the second equation is backward in time.The other case in which pedestrians are attracted by the high-density ($\beta =0$) which may generate some concentration phenomena will also be discussed. This corresponds to the equation
\begin{eqnarray}
  \partial_t \rho - \nabla . (\rho \nabla \phi) - \sigma \Delta
  \rho & = & 0  \qquad  (x,y)\in\Omega, \quad t \in (0,T)\label{maineq0}\\
  \partial_t \phi - \frac{1}{2} | \nabla \phi |^2 + \sigma
  \Delta \phi + \frac{1}{2} f^2 (\rho) & = & 0 \qquad (x,y)\in\Omega, \quad t \in (0,T)  \label{maineq20}\\
    \rho(x,y,t=0)&=& \rho_{0}(x,y),\quad (x,y)\in\Omega \label{GHi10}\\ 
\phi(x,y,t=T)&=& \phi_{T}(x,y),\quad (x,y)\in\Omega \label{GHi20}
\end{eqnarray}

In this paper, we want to answer these questions for infinitely large corridors, that is, $L = h = + \infty$. In other words, we look at the system on $\mathbb{R}^2$ rather than a rectangle. In that case, there are no more boundary conditions, which will simplify the problem, while still keeping the main difficulty, which is the forward-backward nature, because evolution runs forward while optimization runs backward by the Bellman dynamic programming principle. Our main result will be this stability, with and without viscosity, under the condition that $\bar{\rho} < \frac{\rho_m}{2}$ which is going with the Lasry-Lions monotonicity condition proved in \cite{gm2023}, see Theorem \ref{thmStab} below. We expect the stability to fail if $\bar{\rho} > \frac{\rho_m}{2}$, and we will discuss this later on.

Let us now write the problem we are considering for $\beta = 2$. We look for a solution of
equations (\ref{maineq})-(\ref{maineq2}) of the form
\[ \left( \bar{\rho} + \psi (x, y, t), \frac{x}{f (\bar{\rho})} + \varphi (x,
   y, t) \right) \]
where $\psi, \varphi: \mathbb{R}^2 \times [0, T] \rightarrow \mathbb{R}$. After computations, we check that $\psi,
\varphi$ satisfies the system
\begin{eqnarray}
  \partial_t \psi & = & (f (\bar{\rho}) - 2 \bar{\rho}) \partial_x \psi +
  \bar{\rho} f^2 (\bar{\rho}) \Delta \varphi + \sigma \Delta \psi  \nonumber\\ & + & \nabla . ((\psi f^2 (\bar{\rho}) - 2 f (\bar{\rho}) (\bar{\rho} +\psi) \psi + (\bar{\rho} + \psi) \psi^2) \nabla \varphi) \nonumber \\ &+& \frac{1}{f (\bar{\rho})} \partial_x (\psi^3) +  \left( \frac{\bar{\rho}}{f (\bar{\rho})} - 2 \right) \partial_x
  (\psi^2)  \quad (x,y)\in\mathbb{R}^2 \,\, t\in (0,T),\label{mq1}
\end{eqnarray}
and
\begin{eqnarray}
  \partial_t \varphi & = & f (\bar{\rho}) \partial_x \varphi - \frac{1}{f
  (\bar{\rho})} \psi - \sigma \Delta \varphi \nonumber\\
  & + &  \frac{1}{2} f^2 (\bar{\rho}) | \nabla \varphi |^2 + \frac{1}{2}
  (\psi^2 - 2 f (\bar{\rho}) \psi) | \nabla \varphi |^2 \nonumber\\
  & + & \frac{1}{2 f (\bar{\rho})} \psi^2 + \partial_x \varphi \left(
  \frac{1}{f (\bar{\rho})} \psi^2 - 2 \psi \right)  \quad (x,y)\in\mathbb{R}^2 \,\, t\in (0,T).  \label{mq2}
\end{eqnarray}
In these two equations, we have regrouped the linear terms in the first line.
To the system  \eqref{mq1}-\eqref{mq2}, we associate the initial data 
\begin{equation}\label{inData}
\psi (x, y, 0) = \psi_0 (x,y)\,\,\ \text{and}\,\, \varphi (x, y, T) = \varphi_T (x, y)\quad \text{for} \,\,\, (x, y) \in \mathbb{R}^2.
\end{equation}
Our goal is to show that the system \eqref{mq1}-\eqref{inData} admits a unique solution and that this
solution stays small if we suppose that $\psi_0$ and $\varphi_T$ are small in some sense.

%%%%%%%%%%%%%%%%%%%%%%%%%%%%%%%%%%%%%%%%%%%%%%%%%%%%%%
%%%%%%%%%%%%%%%%%%%%%%%%%%%%%%%%%%%%%%%%%%%%%%%%%%%%%%
\subsection{Stability results without viscosity}
%%%%%%%%%%%%%%%%%%%%%%%%%%%%%%%%%%%%%%%%%%%%%%%%%%%%%%
%%%%%%%%%%%%%%%%%%%%%%%%%%%%%%%%%%%%%%%%%%%%%%%%%%%%%%
\subsubsection{Linear stability analysis without viscosity}% and $\bar{\rho} <\frac{\rho_m}{2}$}
%%%%%%%%%%%%%%%%%%%%%%%%%%%%%%%%%%%%%%%%%%%%%%%%%%%%%%
%%%%%%%%%%%%%%%%%%%%%%%%%%%%%%%%%%%%%%%%%%%%%%%%%%%%%%
Here, we look at the linearized version of the above problem without viscosity, that is the system of equations
\begin{equation}\label{losttimes}
  \begin{cases}
    \partial_t \psi = (f (\bar{\rho}) - 2 \bar{\rho}) \partial_x \psi + \bar{\rho} f^2 (\bar{\rho}) \Delta \varphi & \qquad  (x,y)\in\mathbb{R}^2,\,\,\,\, t\in (0,T)\\
    \partial_t \varphi = f (\bar{\rho}) \partial_x \varphi - \frac{1}{f(\bar{\rho})} \psi &\qquad  (x,y)\in\mathbb{R}^2, \,\,\,\, t\in (0,T)\\
    \psi (x, y, 0) = \psi_0 (x, y)  &\qquad  (x,y)\in\mathbb{R}^2 \\
    \varphi (x,y, T) = \varphi_T (x, y)&\qquad  (x,y)\in\mathbb{R}^2.
  \end{cases}
\end{equation}
Now, we will state our main linear stability result for system \eqref{losttimes} as follow.
\begin{proposition}
  \label{nina}Suppose that $\bar{\rho} < \frac{\rho_m}{2}$. Then, there exists
  $K > 0$ such that for any $T > 0$, system \eqref{losttimes} with

 \[ \hat{\psi}_0, \widehat{\nabla \varphi_T} \in (L^1 \cap L^{\infty}) (\mathbb{R}^2), \]
  admits a unique solution on $[0, T]$. Moreover, the solution satisfies
  
  \begin{equation}\label{EstLinL2}
  \begin{aligned}
  \| \psi(t)\|_{L^2} + \| \nabla \varphi(t) \|_{L^2} &\leqslant K \left( \frac{\|
     \hat{\psi}_0 (\xi) \|_{L^1 \cap L^{\infty}}}{(1 + t)} + \frac{\| | \xi |
     \hat{\varphi}_T (\xi) \|_{L^1 \cap L^{\infty}}}{(1 + T - t)} \right)
   \end{aligned}
   \end{equation}
and
  \begin{equation}\label{EstLinLinf}
  \begin{aligned}
 \| \psi(t)\|_{L^{\infty}} + \| \nabla \varphi(t) \|_{L^{\infty}} &\leqslant K
   \left( \frac{\| \hat{\psi}_0 (\xi) \|_{L^1 \cap L^{\infty}}}{(1 + t)^2} +
   \frac{\| | \xi | \hat{\varphi}_T (\xi) \|_{L^1 \cap L^{\infty}}}{(1 + T -
   t)^2} \right) . 
   \end{aligned}
   \end{equation}
   \end{proposition}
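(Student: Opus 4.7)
The plan is to solve the linearized system \eqref{losttimes} explicitly in Fourier variables, which is natural since the coefficients are constant in $(x,y)$. Taking the Fourier transform in $\xi = (\xi_1,\xi_2) \in \mathbb{R}^2$ turns \eqref{losttimes} into the $\xi$-parametrized $2\times 2$ ODE
$$\partial_t\hat\psi = i(f(\bar\rho)-2\bar\rho)\xi_1\,\hat\psi - \bar\rho f^2(\bar\rho)|\xi|^2\,\hat\varphi,\qquad \partial_t\hat\varphi = if(\bar\rho)\xi_1\,\hat\varphi - \tfrac{1}{f(\bar\rho)}\hat\psi.$$
Writing $a=f(\bar\rho)$ and $b=\bar\rho$, a direct computation gives eigenvalues $\lambda_\pm(\xi) = i(a-b)\xi_1 \pm \mu(\xi)$ with
$$\mu(\xi) = \sqrt{b\bigl((a-b)\xi_1^2 + a\xi_2^2\bigr)}.$$
The hypothesis $\bar\rho < \rho_m/2$ is exactly $a-b = \rho_m - 2\bar\rho > 0$, so $\mu(\xi)$ is real and $\mu(\xi) \geq c|\xi|$ for some $c>0$; one eigenvalue has strictly positive and the other strictly negative real part for $\xi\neq 0$.

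I would then write the general solution in the form
$$\begin{pmatrix}\hat\psi(t,\xi)\\ \hat\varphi(t,\xi)\end{pmatrix} = A_-(\xi)\,v_-(\xi)\,e^{\lambda_-(\xi)t} + A_+(\xi)\,v_+(\xi)\,e^{\lambda_+(\xi)(t-T)},$$
with carefully chosen eigenvectors $v_\pm(\xi)$. The key point of this parametrization is that both propagators $e^{\lambda_-(\xi)t}$ and $e^{\lambda_+(\xi)(t-T)}$ have modulus $\leq 1$ on $[0,T]$. Imposing the initial condition at $t=0$ and the terminal condition at $t=T$ produces a $2\times 2$ linear system for $(A_-,A_+)$ whose off-diagonal entries carry a factor $e^{-\mu(\xi)T}\in(0,1]$ and which is therefore uniformly invertible in $T > 0$; this simultaneously yields existence, uniqueness, and the pointwise bound $|A_\pm(\xi)| \leq C\bigl(|\hat\psi_0(\xi)| + |\xi|\,|\hat\varphi_T(\xi)|\bigr)$ with $C$ independent of $T$. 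The factor $|\xi|$ is exactly what links the data to $\widehat{\nabla\varphi_T}$ and explains the appearance of $|\xi|\hat\varphi_T$ rather than $\hat\varphi_T$ in the hypotheses.

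For the quantitative bounds I would combine the pointwise representation with Plancherel (for \eqref{EstLinL2}) and with the inequality $\|g\|_{L^\infty}\leq (2\pi)^{-2}\|\hat g\|_{L^1}$ (for \eqref{EstLinLinf}). The quantity $|\hat\psi(t,\xi)| + |\xi|\,|\hat\varphi(t,\xi)|$ is controlled by $|A_-(\xi)|e^{-\mu(\xi)t} + |A_+(\xi)|e^{-\mu(\xi)(T-t)}$, and using $\mu(\xi)\geq c|\xi|$ one has
$$\int_{\mathbb{R}^2}|A_-(\xi)|\,e^{-\mu(\xi)t}\,d\xi \leq C\|\hat\psi_0\|_{L^\infty}\int_{\mathbb{R}^2}e^{-c|\xi|t}\,d\xi \leq \frac{C'\|\hat\psi_0\|_{L^\infty}}{t^2},$$
which combined with the crude bound $\leq C\|\hat\psi_0\|_{L^1}$ for $t\leq 1$ gives the $(1+t)^{-2}$ decay in \eqref{EstLinLinf}; the $L^2$ bound follows similarly from $\int e^{-2c|\xi|t}d\xi \lesssim t^{-2}$ and, for small $t$, the interpolation $\|f\|_{L^2}\leq\sqrt{\|f\|_{L^1}\|f\|_{L^\infty}}$, producing the $(1+t)^{-1}$ rate. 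The $A_+$ piece is treated symmetrically with $t$ replaced by $T-t$.

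The step I expect to require the most care is the choice and normalization of $v_\pm(\xi)$ near $\xi=0$, where $\mu(\xi)\to 0$ and the eigenvalues collide. A naive normalization produces spurious $|\xi|^{-1}$ singularities in the components of $v_\pm$, and verifying that one can choose $v_\pm$ so that both the $2\times 2$ system determining $A_\pm$ is uniformly well-conditioned in $\xi$ \emph{and} in $T$, and so that the dependence of $A_\pm$ on the data is exactly $|\hat\psi_0| + |\xi|\,|\hat\varphi_T|$ with no extra $|\xi|^{-1}$ blow-up, is the real technical content behind the clean statement of Proposition~\ref{nina}.
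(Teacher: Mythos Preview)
Your proposal is correct and follows essentially the same route as the paper: Fourier transform, diagonalization of the resulting $2\times2$ constant-coefficient system, forward/backward integration along the stable/unstable eigendirections, solving a $2\times2$ linear system (uniformly in $T$ thanks to the $e^{-\mu(\xi)T}$ off-diagonal factors) to match the mixed data $(\hat\psi_0,\hat\varphi_T)$, and then converting the pointwise Fourier bound into $L^2$ and $L^\infty$ decay via Plancherel and $\|g\|_{L^\infty}\leq C\|\hat g\|_{L^1}$. One small refinement you will need when writing details: to obtain the \emph{separated} form of \eqref{EstLinL2}--\eqref{EstLinLinf} (with $\hat\psi_0$ appearing only against $(1+t)^{-k}$ and $|\xi|\hat\varphi_T$ only against $(1+T-t)^{-k}$), you must track that the cross-contributions of $\hat\varphi_T$ to $A_-$ and of $\hat\psi_0$ to $A_+$ carry the extra factor $e^{-\mu(\xi)T}$, which after multiplying by the propagator gets absorbed into the ``correct'' exponential; your crude bound $|A_\pm|\leq C(|\hat\psi_0|+|\xi||\hat\varphi_T|)$ alone only yields the weaker mixed estimate.
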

 
 Note the uniformity of this result with respect to $T$. The initial perturbation occurs at $t=0$ for $\psi$ and at $t=T$ for $\varphi$. Consequently, we anticipate that for large values of $T$, when we estimate the error at a time $t$ much smaller than $T$, only the impact of the perturbation in $\psi$  will have a discernible effect. This observation indeed follows as a consequence of the aforementioned result.

\

Let us explain the proof of this result. Taking the Fourier transform of this
system yields, with $\xi = (\xi_1, \xi_2) \in \mathbb{R}^2$ the Fourier
coordinates,
\[ \partial_t \left(\begin{array}{c}
     \hat{\psi}\\
     \hat{\varphi}
   \end{array}\right) = A (\xi) \left(\begin{array}{c}
     \hat{\psi}\\
     \hat{\varphi}
   \end{array}\right) \]
where
\begin{equation}
  A (\xi) \assign \left(\begin{array}{cc}
    i (f (\bar{\rho}) - 2 \bar{\rho}) \xi_1 & - \bar{\rho} f^2 (\bar{\rho}) |
    \xi |^2\\
    \frac{- 1}{f (\bar{\rho})} & i f (\bar{\rho}) \xi_1
  \end{array}\right) . \label{matA}
\end{equation}
We introduce the quantity
\[ \theta (\xi) \assign 2 \sqrt{f (\bar{\rho}) \bar{\rho} | \xi |^2 -
   \bar{\rho}^2 | \xi_1 |^2}, \]
and the checkthat the eigenvalues of $A (\xi)$ are
\begin{eqnarray*}
  \lambda_1 (\xi) & \assign & \frac{1}{2} (- \theta (\xi) + 2 i (f
  (\bar{\rho}) - \bar{\rho}) \xi_1)\\
  \lambda_2 (\xi) & \assign & \frac{1}{2} (\theta (\xi) + 2 i (f (\bar{\rho})
  - \bar{\rho}) \xi_1) .
\end{eqnarray*}
To get some decay, it will be important that $\lambda_1$ and $\lambda_2$ have
nonzero real parts for all $\xi \neq 0$. Here the condition
$\bar{\rho} < \frac{\rho_m}{2}$ appears: we need what is below the square root
in $\theta (\xi)$ to be positive for all $\xi \neq 0$, and we check that
\begin{equation}\label{thetaEq}
 \theta (\xi) = 2 \sqrt{\bar{\rho} (f (\bar{\rho}) - \bar{\rho}) | \xi_1 |^2
   + \bar{\rho} f (\bar{\rho}) | \xi_2 |^2}, 
   \end{equation}
and thus we need $f (\bar{\rho}) - \bar{\rho} > 0$, which is equivalent to
$\bar{\rho} < \frac{\rho_m}{2}$. Then, we will diagonalize the matrix $A$, and
integrate the coordinates forward in time one, and backward in time the
second. This will fix $\varphi_0 (x, y) = \varphi (x, y, 0)$ and $\psi_T (x,
y) = \psi (x, y, T)$ given the initial data $\psi_0, \varphi_T$ through
compatibility conditions, so that we have a decay in time of the solutions. See
Section \ref{lv} for the full proof of Proposition \ref{nina}.
%%%%%%%%%%%%%%%%%%%%%%%%%%%%%%%
%%%%%%%%%%%%%%%%%%%%%%%%%%%%%%%
\subsubsection{Nonlinear stability analysis  without viscosity}% for $\sigma = 0$ and $\bar{\rho} <\frac{\rho_m}{2}$}
%%%%%%%%%%%%%%%%%%%%%%%%%%%%%%%
%%%%%%%%%%%%%%%%%%%%%%%%%%%%%%%
We now state the stability under the condition $\bar{\rho} < \frac{\rho_m}{2}$ for the full nonlinear system.

\begin{thm}\label{thmStab}
  \label{aiaiaia}Suppose that $\bar{\rho} < \frac{\rho_m}{2}$. Then, there exists $\varepsilon_0, K > 0$ such that for any $T > 0$, system \eqref{mq1}-\eqref{inData} with
  \[ \| (1 + | \xi |)^3 \hat{\psi}_0 \|_{L^{\infty} (\mathbb{R}^2)} + \| (1 +
     | \xi |)^3 \widehat{\nabla \varphi_T} \|_{L^{\infty} (\mathbb{R}^2)}
     \leqslant \varepsilon_0 \]
  admits a unique solution in $L^2 \cap L^{\infty}$ on $[0, T]$, analytic on $] 0, T
  [$, and this solution satisfies for all $t \in [0, T]$ that
  \begin{equation}\label{NL2}
  \begin{aligned}
     (\| \psi \|_{L^2} + \| \nabla \varphi \|_{L^2}) (t)\leqslant & K \left( \frac{\| (1 + | \xi |)^3 \hat{\psi}_0\|_{L^{\infty} (\mathbb{R}^2)}}{(1 + t)} + \frac{\| (1 + | \xi |)^3 \widehat{\nabla \varphi_T} \|_{L^{\infty} (\mathbb{R}^2)}}{(1 + T - t)}\right),
    \end{aligned}
\end{equation}
  and
  \begin{equation}\label{NLinf}
  \begin{aligned}
     (\| \psi \|_{L^{\infty}} + \| \nabla \varphi \|_{L^{\infty}}) (t) & \leqslant  K \left( \frac{\| (1 + | \xi |)^3 \hat{\psi}_0 \|_{L^{\infty} (\mathbb{R}^2)}}{(1 + t)^2} + \frac{\| (1 + | \xi |)^3 \widehat{\nabla \varphi_T} \|_{L^{\infty} (\mathbb{R}^2)}}{(1 + T - t)^2} \right) .
    \end{aligned}
\end{equation}
\end{thm}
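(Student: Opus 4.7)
The approach is a Banach fixed-point argument with Proposition \ref{nina} as its linear backbone. I would first recast \eqref{mq1}--\eqref{mq2} as
\[
\partial_t U = \mathcal{L} U + \mathcal{N}(U), \qquad U = (\psi,\varphi),
\]
where $\mathcal{L}$ is the linear operator of \eqref{losttimes} and $\mathcal{N}(U)$ collects all the quadratic, cubic and quartic terms on the right-hand sides of \eqref{mq1}--\eqref{mq2}. The explicit solution of the linear problem built in the proof of Proposition \ref{nina}---diagonalise $A(\xi)$, integrate the stable Fourier mode forward from $t=0$ and the unstable one backward from $t=T$---extends, via Duhamel, to an explicit integral formula for the nonlinear equation once $\mathcal{N}(U)$ is prescribed. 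I would work in the Banach space $X_T$ of pairs $U = (\psi,\varphi)$ with finite norm
\[
\|U\|_{X_T} \assign \sup_{t\in[0,T]}\Bigl\{\alpha(t)^{-1}\bigl(\|\psi\|_{L^2}+\|\nabla\varphi\|_{L^2}\bigr) + \beta(t)^{-1}\bigl(\|\psi\|_{L^\infty}+\|\nabla\varphi\|_{L^\infty}\bigr) + \|(1+|\xi|)^3\hat\psi\|_{L^\infty}+\|(1+|\xi|)^3\widehat{\nabla\varphi}\|_{L^\infty}\Bigr\},
\]
with $\alpha(t)=\tfrac{1}{1+t}+\tfrac{1}{1+T-t}$ and $\beta(t)=\tfrac{1}{(1+t)^2}+\tfrac{1}{(1+T-t)^2}$. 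The time-weighted physical norms encode \eqref{NL2}--\eqref{NLinf}, while the weighted Fourier $L^\infty$ pieces provide the algebra structure needed to absorb $\mathcal{N}(U)$ at each time slice.

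The second step is to bound $\mathcal{N}(U)$ in the Fourier norm required by Proposition \ref{nina}. All terms in $\mathcal{N}(U)$ are products of factors drawn from $\psi$ and $\nabla\varphi$, possibly followed by a single divergence. The key algebra bound is
\[
\|(1+|\xi|)^3\widehat{fg}\|_{L^\infty} \le C\,\|(1+|\xi|)^3\hat f\|_{L^\infty}\,\|(1+|\xi|)^3\hat g\|_{L^\infty},
\]
which follows from the Peetre-type inequality $(1+|\xi|)^3 \le C(1+|\xi-\eta|)^3(1+|\eta|)^3$ together with the integrability of $(1+|\cdot|)^{-3}$ on $\mathbb{R}^2$. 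Since $(1+|\xi|)^3\hat g\in L^\infty$ implies $\hat g\in L^1\cap L^\infty$, the resulting Fourier bound on $\mathcal{N}(U)(s)$ is precisely what is needed to plug into Proposition \ref{nina} as a source term at time $s$. The exponent $3$ is the minimal integer weight for which this convolution algebra closes in two dimensions while simultaneously absorbing the single derivative of $\nabla\cdot(\cdots)$ in \eqref{mq1}.

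Feeding these bounds into the explicit Duhamel representation reduces the closure of $\|\mathcal{T}U\|_{X_T}$ to time integrals of the decay kernels from Proposition \ref{nina} against the time-decay factors carried by $\|U\|_{X_T}$. A careful computation shows that these integrals return precisely the required decay $\alpha(t), \beta(t)$ uniformly in $T$, yielding $\|\mathcal{T}U\|_{X_T}\le K\varepsilon_0 + C\|U\|_{X_T}^2 + C\|U\|_{X_T}^3$. For $\varepsilon_0$ small, $\mathcal{T}$ thus maps the ball $\{\|U\|_{X_T}\le 2K\varepsilon_0\}$ into itself, and the same convolution estimates applied to $U_1-U_2$ produce contraction on this ball. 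Uniqueness and the bounds \eqref{NL2}--\eqref{NLinf} follow from the Banach fixed-point theorem. Analyticity on $]0,T[$ is a bonus: for every $\xi \neq 0$ the eigenvalues $\lambda_{1,2}$ have real parts $\pm\theta(\xi)/2$ growing linearly in $|\xi|$ under $\bar\rho<\rho_m/2$, so the linear propagator carries the exponential Fourier decay $e^{-c|\xi|\min(t,T-t)}$ characteristic of analyticity in a horizontal strip, a property inherited by the fixed point.

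The main obstacle is the forward--backward structure of the system, which forbids any direct application of a standard parabolic or dispersive contraction argument: the stable/unstable split from the proof of Proposition \ref{nina} has to be respected pointwise in $\xi$ throughout the iteration, and the resulting bounds must be uniform in $T$. This uniformity is what forces the additive decay factor $\alpha(t)$ rather than any multiplicative substitute, and it is what makes the time-integrability of $\alpha^2$ on $[0,T]$---with constant independent of $T$---a crucial structural property of the linear kernels in \eqref{EstLinL2}--\eqref{EstLinLinf}. Everything ultimately hinges on the reality and strict positivity of $\theta(\xi)$ on $\xi\neq 0$, that is, on $\bar\rho<\rho_m/2$; beyond this threshold the two eigenvalues become purely imaginary on a Fourier cone, the algebraic linear decay breaks down, and the present scheme cannot be closed---consistent with the instability conjectured in the introduction for $\bar\rho > \rho_m/2$.
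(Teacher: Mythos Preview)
Your overall strategy---diagonalise in Fourier, integrate the two modes forward/backward, write Duhamel, close by Banach fixed point with an algebra estimate on the $(1+|\xi|)^3$ weight---matches the paper's. The gap is in the choice of norm. Your $X_T$ couples a \emph{time-independent} weighted Fourier bound $\|(1+|\xi|)^3\hat\psi\|_{L^\infty}$ with time-weighted physical $L^2$ and $L^\infty$ norms, and you assert that ``a careful computation'' closes the time decay through the Duhamel integral. It does not close as stated: the nonlinear source $\widehat{\mathrm{NL}}(s)$, measured in your Fourier piece, carries no decay in $s$, so convolving in time against the $1/(1+t-s)$ kernel from Proposition~\ref{nina} produces a logarithmic loss; trying instead to use the physical decay $\alpha(s)\beta(s)$ of the source runs into the extra $|\xi|$ in the Duhamel integrand, since $\sup_\xi e^{-c|\xi|(t-s)}|\xi|=C/(t-s)$ is not locally integrable in $s$, and curing this via the Fourier piece again drops the time decay. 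Neither route returns $\alpha(t)$ for the output, so the iteration does not close uniformly in $T$.

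The paper's remedy is to build the exponential Fourier decay \emph{into the norm itself}:
\[
\|\hat f(\cdot,t)\| \;=\; \left\|\frac{(1+|\xi|)^3\hat f(\xi,t)}{e^{-c|\xi|t}+e^{-c|\xi|(T-t)}}\right\|_{L^\infty_\xi}.
\]
This single norm simultaneously encodes the algebra structure (the envelope $e^{-c|\xi|t}+e^{-c|\xi|(T-t)}$ is submultiplicative under convolution by the triangle inequality, Lemma~\ref{themusic}), the propagation under the linear flow (Lemma~\ref{rezident}), and the absorption of the stray $|\xi|$ in Duhamel via $\int_0^t e^{-2c|\xi|(t-s)}|\xi|\,ds\le C$ while \emph{preserving} the envelope (Lemma~\ref{gettingstronger}). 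The $L^2$ and $L^\infty$ time decays \eqref{NL2}--\eqref{NLinf} are then read off \emph{a posteriori} by integrating the envelope in $\xi$. You already invoke this exponential decay when arguing analyticity; the point is that it must be carried through the fixed-point norm, not appended at the end.
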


We believe that these decay rates in time are optimal (at least without additional
assumptions on $\psi$ and $\nabla \varphi$). We also show a regularizing
effect: the solution is analytic on $] 0, T [$.

\
We should note that even the existence of a solution to the systems \eqref{mq1}-\eqref{inData} was open. The existence will come with the estimate, as the solution will be constructed and estimated by a fixed point argument. That is, we will construct a first order approximation of the solution with the linear problem. Then, solving the linear problem with a source term, we construct a second order approximation
This iterative process will be repeated infinitely many times, ultimately leading to the creation of an exact solution to the problem. See section \ref{alive} for the proof of this result.

\subsection{Stability analysis with a viscosity term}

We now turn our attention to the scenario where $\sigma > 0$. The introduction of this additional viscosity leads us to anticipate improved stability when $\bar{\rho} < \frac{\rho_m}{2}$. However, this expectation does not appear to hold. It's noteworthy that, as stated in Theorem \ref{thmStab}, the time decay rates are already superior to those of the classical heat equation.  In fact, they are the ones of a fractional heat equation (that is replacing $\Delta$ by $\Delta^{1/2}$).

In this scenario, we are also interested in the case of vanishing viscosity (such as $\sigma \rightarrow 0$), since a small viscosity is commonly used in numerical simulations. Consequently, we will demonstrate the preservation properties in the context of the vanishing viscosity limit for the nonlinear problem. Our result is as follows:

\begin{thm}
  \label{voices}Suppose that $\bar{\rho} < \frac{\rho_m}{2}$. Then, for $\sigma
  > 0$, there exists $K, \varepsilon_0 > 0$ such that system \eqref{mq1}-\eqref{inData} with
  \[ \| (1 + | \xi |)^3(1+\sigma |\xi|) \hat{\psi}_0 \|_{L^{\infty} (\mathbb{R}^2)} + \| (1 +
     | \xi |)^3 (| \xi | + \sigma | \xi |^2) \widehat{\varphi_T}
     \|_{L^{\infty} (\mathbb{R}^2)} \leqslant \varepsilon_0 \]
  admits a unique solution ($\psi_{\sigma}, \varphi_{\sigma}$) in $L^2 \cap
  L^{\infty}$ on $[0, T]$ and this solution satisfies for all $t \in [0, T]$
  that
  \begin{equation}\label{viscoNL2}
  \begin{aligned}
 (\| \psi_{\sigma} \|_{L^2} + \| \nabla \varphi_{\sigma} \|_{L^2} + & \sigma \| \nabla^2 \varphi_{\sigma} \|_{L^2}) (t) \\& \leqslant  K \left( \frac{\| (1 + | \xi |)^3(1+\sigma|\xi|) \hat{\psi}_0
    \|_{L^{\infty} (\mathbb{R}^2)}}{(1 + t)} + \frac{\| (1 + | \xi |)^3 (| \xi | + \sigma | \xi |^2) \widehat{\varphi_T} \|_{L^{\infty}(\mathbb{R}^2)}}{(1 + T - t)} \right)
    \end{aligned}
\end{equation}
  and
  \begin{equation}\label{viscoNLinf}
  \begin{aligned}
 (\| \psi_{\sigma} \|_{L^{\infty}} + \| \nabla \varphi_{\sigma}\|_{L^{\infty}} &+ \sigma \| \nabla^2 \varphi_{\sigma} \|_{L^{\infty}})
    (t) \\& \leqslant  K \left( \frac{\| (1 + | \xi |)^3(1+\sigma|\xi|) \hat{\psi}_0 \|_{L^{\infty} (\mathbb{R}^2)}}{(1 + t)^2} + \frac{\| (1 + | \xi |)^3 (|\xi | + \sigma | \xi |^2) \widehat{\varphi_T} \|_{L^{\infty}(\mathbb{R}^2)}}{(1 + T - t)^2} \right).
    \end{aligned}
  \end{equation}
  Furthermore, if we consider ($\psi_{\sigma}, \varphi_{\sigma}$) the solution to the system  \eqref{mq1}-\eqref{inData} with viscosity and $(\psi, \varphi)$ the solution for the vanishing viscosity system  \eqref{mq1}-\eqref{inData} with the same initial data $\psi_0, \varphi_T$
  and
  \[ \| (1 + | \xi |)^5 (1+\sigma|\xi|))\hat{\psi}_0 \|_{L^{\infty} (\mathbb{R}^2)} + {\| (1 +
     | \xi |)^7 \widehat{\varphi_T} \|_{L^{\infty} (\mathbb{R}^2)} < + \infty}
  \]
  then
  \[ \| \psi_{\sigma} - \psi \|_{L^2 \cap L^{\infty}} + \| \nabla
     \varphi_{\sigma} - \nabla \varphi \|_{L^2 \cap L^{\infty}} \rightarrow 0,  \,\,\,\text{as}\,\,\, \sigma \rightarrow 0,
  \]
  uniformly on $[0, T]$.
\end{thm}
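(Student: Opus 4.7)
The plan is to mirror the proof of Theorem \ref{thmStab} at every step: first analyse the viscous linear problem in Fourier, then run a Picard-type fixed-point iteration whose constants are uniform in $\sigma$, and finally compare the two flows by setting $(\tilde\psi,\tilde\varphi) \assign (\psi_\sigma-\psi,\varphi_\sigma-\varphi)$. For the linear step, the Fourier-side generator becomes
\[
A_\sigma(\xi) \assign A(\xi) + \sigma|\xi|^2 \begin{pmatrix} -1 & 0 \\ 0 & 1 \end{pmatrix},
\]
with $A(\xi)$ as in \eqref{matA}. The trace is unchanged, and a direct computation shows that the discriminant of the characteristic polynomial equals $\theta(\xi)^2 + 8i\sigma\bar\rho|\xi|^2\xi_1 + 4\sigma^2|\xi|^4$, a complex number whose real part $\theta(\xi)^2 + 4\sigma^2|\xi|^4$ is nonnegative. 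When $\bar\rho < \rho_m/2$, its principal square root $\theta_\sigma(\xi)$ then satisfies $\mathrm{Re}(\theta_\sigma(\xi)) \geq c(\theta(\xi)+\sigma|\xi|^2)$ for some $c>0$, so one eigenmode of $A_\sigma$ decays forward in time with rate $e^{-c(\theta(\xi)+\sigma|\xi|^2)t}$ and the other decays backward with rate $e^{-c(\theta(\xi)+\sigma|\xi|^2)(T-t)}$. Diagonalising exactly as in Proposition \ref{nina} and solving the forward/backward compatibility conditions that fix $\hat\varphi(\xi,0)$ and $\hat\psi(\xi,T)$ yields linear estimates of the same shape as \eqref{EstLinL2}-\eqref{EstLinLinf}, enhanced by a control of $\sigma\|\nabla^2\varphi\|$. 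The extra $(1+\sigma|\xi|)$ weights appearing in the hypothesis enter because, in the viscosity-dominated regime $\sigma|\xi|^2 \gg 1$, the components of the spectral projectors responsible for producing $\nabla^2\varphi_\sigma$ from $\hat\psi_0$ scale like $\sigma|\xi|^2$ rather than being $O(1)$.

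With these linear estimates in hand, the Picard iteration used for Theorem \ref{thmStab} carries over: at each step one solves the viscous linear problem with forcing given by the quadratic and cubic nonlinearities on the right-hand sides of \eqref{mq1}-\eqref{mq2}. The cubic weight $(1+|\xi|)^3$ still ensures that the nonlinearities map the working space back into itself, and the extra $(1+\sigma|\xi|)$ factor, combined with the new $\sigma\|\nabla^2\varphi_\sigma\|$ norm, absorbs the viscous contributions. Smallness of $\varepsilon_0$ then makes the iteration a contraction with constants independent of $\sigma>0$, producing simultaneously existence, uniqueness, and the bounds \eqref{viscoNL2}-\eqref{viscoNLinf}.

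For the vanishing-viscosity limit, $(\tilde\psi,\tilde\varphi)$ solves the viscous linearised system around $(\bar\rho, x/f(\bar\rho))$ with zero initial and terminal data, forced by $\sigma\Delta\psi$, $-\sigma\Delta\varphi$ and by the differences of the nonlinear terms at $(\psi_\sigma,\varphi_\sigma)$ and at $(\psi,\varphi)$. The stronger hypotheses $\|(1+|\xi|)^5(1+\sigma|\xi|)\hat\psi_0\|_\infty + \|(1+|\xi|)^7\widehat{\varphi_T}\|_\infty < \infty$, injected into the linear estimates of Step~1 applied to the equations for $\nabla^2\psi$ and $\nabla^2\varphi$, provide $L^2\cap L^\infty$ bounds on the second derivatives of $(\psi,\varphi)$ uniform in $\sigma\geq 0$ and $t\in[0,T]$; hence the viscous forcing is $O(\sigma)$ in the relevant norms. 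The nonlinear difference is quadratic and cubic in $(\tilde\psi,\tilde\varphi)$ with smooth coefficients controlled by $\varepsilon_0$ and is absorbed. A final application of the viscous linear estimate then gives $\|\tilde\psi\|_{L^2\cap L^\infty} + \|\nabla\tilde\varphi\|_{L^2\cap L^\infty} = O(\sigma) \to 0$ uniformly on $[0,T]$. The main obstacle is the viscous linear analysis: obtaining spectral estimates for $A_\sigma(\xi)$ simultaneously uniform in $\sigma\geq 0$ and $\xi\in\mathbb{R}^2$, and tracking the $\sigma|\xi|$ corrections through the forward/backward compatibility procedure — this is precisely what pins down the exact powers of $(1+\sigma|\xi|)$ in the theorem and makes the rest of the argument possible.
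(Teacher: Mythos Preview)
Your proposal is correct and follows essentially the same route as the paper: diagonalise $A_\sigma(\xi)$ via the modified discriminant $\theta_\sigma(\xi)$, establish $\mathrm{Re}\,\theta_\sigma(\xi)\sim |\xi|+\sigma|\xi|^2$, rerun the forward/backward fixed-point argument of Theorem~\ref{aiaiaia} with the same $\sigma$-independent norm, and then estimate the difference by treating the viscous Laplacian as an $O(\sigma)$ forcing controlled through the higher-regularity hypothesis. The only cosmetic difference is that the paper writes the equation for $(\psi_\sigma-\psi,\varphi_\sigma-\varphi)$ with the \emph{inviscid} generator $A(\xi)$ and forcing $\sigma(\Delta\psi_\sigma,-\Delta\varphi_\sigma)$, whereas you use the viscous generator and forcing $\sigma(\Delta\psi,-\Delta\varphi)$; these are equivalent reorganisations, and the paper's explicit warning that the working norm must \emph{not} be upgraded to encode the $e^{-c\sigma|\xi|^2 t}$ decay (since $|\xi|^2$ fails the triangle inequality needed in Lemma~\ref{themusic}) is consistent with your sketch.
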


We see that in the second part of this result, we have a loss of derivative, in the sense that we have the convergence in $L^2 \cap L^{\infty}$ but the initial perturbation is in a space of smoother functions.

\subsection{Some remarks}% about the cases $\bar{\rho} > \frac{\rho_m}{2}$ and $\beta = 0$}

\begin{itemizeminus}

\item Based on the stability results provided in the absence of viscosity \eqref{EstLinL2}-\eqref{EstLinLinf}, the nonlinear stability analysis without viscosity \eqref{NL2}\eqref{NLinf}, and the nonlinear stability analysis with viscosity \eqref{viscoNL2}-\eqref{viscoNLinf}, we observe that when the time horizon $T=+\infty$ there is no requirement to impose initial data for the value function at $T=+\infty$. In addition, the perturbations are controlled by the initial perturbation in the density. This implies that for a sufficiently large time horizon $T$, there is no need to specify the initial value of the potential, because the solution will attain an equilibrium state after some time $T>0$.

\item The analysis presented here is still applicable even when we consider the MFG system with a prescribed terminal condition as follows $\rho(t=0,x,y)=\rho_{0}(x,y)$ and $ \phi(t=T,x,y)=G(x,y,m(T))$ as well as to the so-called planning problem with a prescribed terminal density $\rho(t=0,x,y)=\rho_{0}(x,y)$ and $\rho(t=T,x,y)=\rho_{T}(x,y)$, see \cite{mimikos2022regularity}.

\item In the case $\bar{\rho} > \frac{\rho_m}{2}$ (still with $\beta = 2$), the key difference is that now there exist frequencies $\xi \neq 0$ such that $\mathfrak{R}\mathfrak{e}
(\theta (\xi)) = 0$. This implies, for the linear problem, the existence of
planar wave solutions that do not decay in time.

\begin{lemma}
  \label{vortex}Consider the linear problem without viscosity
\begin{equation*}
  \begin{cases}
     \partial_t \psi - (f (\bar{\rho}) - 2 \bar{\rho}) \partial_x \psi - \bar{\rho} f^2 (\bar{\rho}) \Delta \varphi  =0 &\quad (x,y)\in\mathbb{R}^2 \,\,\,\, t\in (0,T)\\
    \partial_t \varphi - f (\bar{\rho}) \partial_x \varphi +\frac{1}{f(\bar{\rho})} \psi =0 &\quad  (x,y)\in\mathbb{R}^2 \,\,\,\, t\in (0,T)\\
    \psi (x, y, 0) = \psi_0 (x, y)  &\qquad  (x,y)\in\mathbb{R}^2 \\
    \varphi (x,y, T) = \varphi_T (x, y)&\qquad  (x,y)\in\mathbb{R}^2.
  \end{cases}
\end{equation*}
Then, if $\bar{\rho} > \frac{\rho_m}{2}$, this problem admits planar waves solution of the form
  \[ \psi = A \cos (a x + b y + c t), \]
  \[ \varphi = B \sin (a x + b y + c t) \]
  where $a, b, c \in \mathbb{R}$ and $A B \neq 0$.
\end{lemma}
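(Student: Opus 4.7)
\textbf{Proof plan for Lemma \ref{vortex}.} The approach is constructive: plug the ansatz directly into the linear system, reduce to a $2\times 2$ linear algebraic system in $(A,B)$, and show that under the hypothesis $\bar{\rho}>\rho_m/2$ one can choose $(a,b,c)\in\mathbb{R}^3$ so that the determinant vanishes while both $A$ and $B$ are forced to be nonzero.

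First, I would set $\omega(x,y,t)=ax+by+ct$, compute $\partial_t\psi=-Ac\sin\omega$, $\partial_x\psi=-Aa\sin\omega$, $\partial_t\varphi=Bc\cos\omega$, $\partial_x\varphi=Ba\cos\omega$, and $\Delta\varphi=-B(a^2+b^2)\sin\omega$. Substituting into the first equation and canceling the common factor $\sin\omega$ yields
\[
\bigl[-c+(f(\bar\rho)-2\bar\rho)a\bigr]A+\bar\rho f^2(\bar\rho)(a^2+b^2)B=0,
\]
while the second equation, after canceling $\cos\omega$, gives
\[
\tfrac{1}{f(\bar\rho)}A+\bigl(c-f(\bar\rho)a\bigr)B=0.
\]

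Next, I would seek $(A,B)\neq 0$: this requires the determinant of the $2\times 2$ system to vanish. Introducing $v\assign c-f(\bar\rho)a$, a short computation reduces the determinant condition to the dispersion relation
\[
v^2+2\bar\rho a\, v+\bar\rho f(\bar\rho)(a^2+b^2)=0,
\]
whose discriminant is $4\bar\rho\bigl((\bar\rho-f(\bar\rho))a^2-f(\bar\rho)b^2\bigr)$. Since $\bar\rho>\rho_m/2$ means $\bar\rho-f(\bar\rho)=2\bar\rho-\rho_m>0$, I would pick $b=0$ and $a\neq 0$, which makes the discriminant strictly positive and yields two real roots $v=-\bar\rho a\pm|a|\sqrt{\bar\rho(2\bar\rho-\rho_m)}$. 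Then $c=v+f(\bar\rho)a\in\mathbb{R}$, and from the second equation $A=-Bf(\bar\rho)v$, so choosing any $B\neq 0$ gives a genuine solution.

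The only subtle point, and essentially the only thing to check, is that $AB\neq 0$, i.e.\ that $v\neq 0$ for the chosen $(a,b)$. If $v=0$ were a root, the constant term $\bar\rho f(\bar\rho)a^2$ would vanish, forcing either $a=0$ or $f(\bar\rho)=\rho_m-\bar\rho=0$; both are excluded. Equivalently, one verifies directly that $-\bar\rho a\pm|a|\sqrt{\bar\rho(2\bar\rho-\rho_m)}=0$ would give $\bar\rho=\rho_m$, contradicting $\bar\rho<\rho_m$. Hence $v\neq 0$, so $A=-Bf(\bar\rho)v\neq 0$, and the pair $(\psi,\varphi)=(A\cos\omega,\,B\sin\omega)$ is the required non-decaying planar wave. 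The initial/terminal data $\psi_0(x,y)=A\cos(ax+by)$ and $\varphi_T(x,y)=B\sin(ax+by+cT)$ are automatically consistent, so nothing further is needed. The only ``obstacle'' here is purely bookkeeping: keeping track of signs in the dispersion relation so that the role of the threshold $\bar\rho=\rho_m/2$ becomes transparent, mirroring the sign analysis of $\theta(\xi)$ in \eqref{thetaEq}.
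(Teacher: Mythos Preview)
Your proof is correct and follows essentially the same route as the paper: substitute the ansatz, obtain a $2\times2$ linear system in $(A,B)$, and analyze when the determinant vanishes for real $(a,b,c)$. The only cosmetic differences are that you introduce the shift $v=c-f(\bar\rho)a$ (which cleans up the quadratic), you take $b=0$ outright whereas the paper argues ``$b^2/a^2$ small enough,'' and you explicitly verify $AB\neq 0$ via $v\neq 0$, a point the paper leaves implicit.
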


Such solutions of the linear problem also exist if we add a small viscosity.
These objects do not exist if $\bar{\rho} < \frac{\rho_m}{2}$. By themselves,
these solutions do not prevent solutions of the linear problem with finite
energy to decay, since they have infinite energy. We are not able at this
point to say whether or not small solutions of the equation with $\bar{\rho} >
\frac{\rho_m}{2}$ and finite energy are stable or not. What is clear, however,
is that we lose the regularizing effect that happened when $\bar{\rho} <
\frac{\rho_m}{2}$, and thus we expect either instability or at least a weaker
stability result (for instance, the decay in time will be slower than in
Theorem \ref{aiaiaia}). Adding viscosity does not clearly implies stability
here.

Now concerning the case $\beta = 0$ (see system \eqref{maineq0}-\eqref{GHi20}), the situation is even
worse. The stationary solution is $(\bar{\rho}, x f (\bar{\rho}))$, and the
linear problem (for $\sigma = 0$) becomes
   \begin{equation*}
  \begin{cases}
    \partial_t \psi - f (\bar{\rho}) \partial_{x} \psi - \bar{\rho} \Delta\varphi = 0 &\quad  (x,y)\in\mathbb{R}^2, \,\,\,\, t\in (0,T)\\
   \partial_t \varphi - f (\bar{\rho}) \partial_{x} \varphi - f(\bar{\rho}) \psi =0 &\quad  (x,y)\in\mathbb{R}^2, \,\,\,\, t\in (0,T)\\
    \psi (x, y, 0) = \psi_0 (x, y)  &\qquad  (x,y)\in\mathbb{R}^2 \\
    \varphi (x,y, T) = \varphi_T (x, y)&\qquad  (x,y)\in\mathbb{R}^2.
  \end{cases}
\end{equation*}

Now, this problem also admits standing waves, but for all values of $a$ and $b$ (and independently of $\bar{\rho}$).

\begin{lemma}
  \label{light}For any $0 < \bar{\rho} < \rho_m$ and any $a, b \in
  \mathbb{R}$, there exists $A, B \in \mathbb{R}$ with $A B \neq 0$ and $c \in
  \mathbb{R}$ such that the equation aboves admits a solution of the form
  \[ \left\{\begin{array}{l}
       \psi = A \cos (a x + b y + c t)\\
       \varphi = B \sin (a x + b y + c t) .
     \end{array}\right. \]
\end{lemma}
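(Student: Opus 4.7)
The plan is to substitute the plane-wave ansatz directly into the linearized $\beta = 0$ system and derive a dispersion relation for $c$. Setting $u \assign ax + by + ct$, I would compute $\partial_t\psi = -Ac\sin u$, $\partial_x\psi = -Aa\sin u$, $\Delta\varphi = -B(a^2+b^2)\sin u$, $\partial_t\varphi = Bc\cos u$, $\partial_x\varphi = Ba\cos u$. Plugging into the first equation, all terms are proportional to $\sin u$, yielding
$A(c - af(\bar{\rho})) = \bar{\rho} B(a^2+b^2)$, while the second equation gives only $\cos u$ terms and yields $B(c - af(\bar{\rho})) = f(\bar{\rho}) A$.

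Multiplying the two relations (or substituting the second into the first) gives the dispersion relation $(c - af(\bar{\rho}))^2 = \bar{\rho} f(\bar{\rho})(a^2+b^2)$. Since $0 < \bar{\rho} < \rho_m$ forces $f(\bar{\rho}) = \rho_m - \bar{\rho} > 0$, the right-hand side is nonnegative for every $(a,b) \in \mathbb{R}^2$, so one can always select a real root $c = af(\bar{\rho}) \pm \sqrt{\bar{\rho} f(\bar{\rho})(a^2+b^2)}$. Fixing this $c$ and choosing any $B \neq 0$, the second relation defines $A = B(c - af(\bar{\rho}))/f(\bar{\rho})$; for $(a,b) \neq (0,0)$ the square root is strictly positive, hence $c - af(\bar{\rho}) \neq 0$ and therefore $A \neq 0$ as required by the $AB \neq 0$ condition.

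No step is genuinely difficult here --- the argument is a direct algebraic verification. The conceptual point worth emphasizing, and which I would mention at the end of the proof, is that the discriminant $\bar{\rho} f(\bar{\rho})(a^2+b^2)$ is nonnegative for every frequency and for every admissible background density $\bar{\rho}$, in sharp contrast with the $\beta = 2$ case where (recall \eqref{thetaEq}) the analogous quantity $\theta(\xi)^2 = 4[\bar{\rho}(f(\bar{\rho})-\bar{\rho})|\xi_1|^2 + \bar{\rho} f(\bar{\rho})|\xi_2|^2]$ fails to be nonnegative in some directions precisely when $\bar{\rho} > \rho_m/2$. This explains why non-decaying plane-wave solutions now exist for every $(a,b) \in \mathbb{R}^2 \setminus \{(0,0)\}$ without any monotonicity threshold on $\bar{\rho}$, confirming the more pathological instability behavior of the attractive ($\beta = 0$) regime advertised in the discussion.
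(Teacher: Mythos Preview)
Your proof is correct and follows essentially the same route as the paper: substitute the plane-wave ansatz into the linearized $\beta=0$ system, reduce to an algebraic compatibility condition on $(A,B,c)$, and observe that the resulting dispersion relation $(c-af(\bar\rho))^2=\bar\rho f(\bar\rho)(a^2+b^2)$ always has real roots because $\bar\rho f(\bar\rho)>0$. The paper phrases this as the discriminant $4(a^2+b^2)\bar\rho f(\bar\rho)\ge 0$ of the quadratic in $c$, which is the same computation; your explicit check that $A\neq 0$ when $(a,b)\neq(0,0)$ is a detail the paper omits.
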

\end{itemizeminus}
\subsection{Open problems and plan of the paper}

Let us state here a list of open problems related to this result.
\begin{itemizeminus}
  \item Can we show stability or instability in the case $\bar{\rho} >
  \frac{\rho_m}{2}$, with or without viscosity, and uniformly or not in the
  time $T > 0$ ? At least at the linear level, some frequencies can be treated
as in the case $\bar{\rho} < \frac{\rho_m}{2}$, but others seem to follow more
of a wave-type equation. Some frequencies, at the boundary of both previous cases, do not decay (see Lemma \ref{vortex}), but they are rare. The
  nonlinear problem introduce the additional difficulty of mixing the
  frequencies.
  
 % \item Can we show that in the limit $\sigma \rightarrow 0$ solutions converges to the solution of the vanishing viscosity limit for the nonlinear problem ? In Theorem \ref{voices} this is showed for the linear problem, but a situation similar to a loss of derivatives makes the nonlinear case difficult.
  
  \item The problem of corridors of finite size $(h < + \infty)$ should work with a similar argument. Most of the proofs should hold if we replace $\xi_2 \in
  \mathbb{R}$ by a sequence of frequencies. We also expect that in the case $\bar{\rho} < \frac{\rho_m}{2}$ we have stability. Having a finite length $(L < + \infty)$ should only help, as perturbations will be evacuated from the corridor after some time.

  \item Our method, of having both the stability and the existence for a
  perturbation of a stationary solution by a fixed point argument, could be
  applied to other MFGs problem written as a forward/backward
  problem. Indeed, let's examine the following second-order MFGs
\begin{align}\label{systemMFGsep}
\partial_t \rho  - \nabla \cdot \left( \rho H_{p}(t,x,y,\nabla  \phi )\right)  - \sigma \Delta  \rho = 0, &\\
-\partial_t \phi  + H(t,x,y,\nabla  \phi ) - \sigma \Delta   \phi  =F(t,x,y,\rho), &
\end{align}
where $t \in(0,T)$, for horizon time $T>0$,  $(x,y)\in \mathbb{R}^{2}$,  $H$ is a convex function of $p$, $ H_{p}$ stands for $\frac{ \partial\mathcal{H}}{\partial p}$ and $F$ is a coupling term. Let us consider the following assumptions on the Hamiltonian $H$ and on the function $F$
\begin{equation}\label{systemMFGsep1}
H(t,x,y,\nabla  \phi ) = \frac{1}{2} |\nabla  \phi |^{2},\,\,\,\quad F(t,x,y,m)= \rho^{\beta} ,\,\,\, \text{for} \,\, \beta> 0,
\end{equation}
for more details we refer to \cite{lasry2007mean} and references therein. A particular solution $\left(\bar{m},\bar{ u}\right)=\left(\bar{m},\sqrt{2\bar{m}^{\beta}}x\right)$  with $\bar{m}$ is a positive constant, can be constructed for system \eqref{systemMFGsep}-\eqref{systemMFGsep1}. Nevertheless, our analysis can address the stability analysis of the system defined by equations \eqref{systemMFGsep}-\eqref{systemMFGsep1}. However, extending these findings to MFGs with strong aggregation, as seen in \cite{cirant2021maximal, cirant2022existence}, and the mean-field game master equations introduced by Lions in \cite{Lions2023} and also discussed in \cite{cardaliaguet2019master}, remains a challenging question.
%
%\item \tcb{The analysis presented in this paper demonstrates that when $\bar{\rho} < \frac{\rho_m}{2}$, positive stability results can be observed. From a control theory perspective, can we construct a feedback control law that ensures $\rho \in [0, \frac{\rho_m}{2}]$? Additionally, we address the question of controllability to trajectories where we can select a profile $\bar{\rho}(t,x,y)$ and seek to design a control $u$ for the system
%\begin{eqnarray}
%  \partial_t \rho - \nabla . (\rho f^2 (\rho) \nabla \varphi) - \sigma \Delta
%  \rho & = & u  \label{maineqc}\\
%  \partial_t \varphi - \frac{1}{2} f^2 (\rho) | \nabla \varphi |^2 + \sigma
%  \Delta \varphi + \frac{1}{2} & = & 0,  \label{maineq2c}
%\end{eqnarray} 
% that guarantees $\|\rho - \bar{\rho}\| \leq \varepsilon$ for sufficiently small $\varepsilon$.}

\end{itemizeminus}
The rest of this paper is structured as follows. Section \ref{lv} is devoted to
the proof of Proposition \ref{nina}, section \ref{alive} to the proof of
Theorem \ref{aiaiaia}, and finally in Section \ref{letmego} we show Theorem
\ref{voices} as well as Lemmas \ref{vortex} and \ref{light}.
\section{Linear problem without viscosity}\label{lv}
In this section, we study the problem
\begin{equation}\label{poissonrocher}
  \begin{cases}
    \partial_t \psi = (f (\bar{\rho}) - 2 \bar{\rho}) \partial_x \psi + \bar{\rho} f^2 (\bar{\rho}) \Delta \varphi  & \quad  (x,y)\in\mathbb{R}^2,\,\,\,\, t\in (0,T)\\
    \partial_t \varphi = f (\bar{\rho}) \partial_x \varphi - \frac{1}{f(\bar{\rho})} \psi &\quad (x,y)\in\mathbb{R}^2, \,\,\,\, t\in (0,T)\\
    \psi (x, y, 0) = \psi_0 (x, y)  &\qquad  (x,y)\in\mathbb{R}^2 \\
    \varphi (x,y, T) = \varphi_T (x, y)&\qquad  (x,y)\in\mathbb{R}^2.
  \end{cases}
\end{equation}
Taking the Fourier transform of this
equation, we write it as
\[ \partial_t \left(\begin{array}{c}
     \hat{\psi}\\
     \hat{\varphi}
   \end{array}\right) = A (\xi) \left(\begin{array}{c}
     \hat{\psi}\\
     \hat{\varphi}
   \end{array}\right) \]
with
\[ A (\xi) = \left(\begin{array}{cc}
     i (f (\bar{\rho}) - 2 \bar{\rho}) \xi_1 & - \bar{\rho} f^2 (\bar{\rho}) |
     \xi |^2\\
     \frac{- 1}{f (\bar{\rho})} & i f (\bar{\rho}) \xi_1
   \end{array}\right), \]
where $ \theta (\xi)$ is given in \eqref{thetaEq}.
\subsection{Diagonalization of $A (\xi)$}
From classical computations we check the following lemma.
\begin{lemma}
  \label{etiq}Take $\xi \in \mathbb{R}^2$ such that $\theta (\xi) \neq 0$.
  Then, the matrix $A (\xi)$ is diagonalizable. Its eigenvalues are \begin{eqnarray*}
    \lambda_1 (\xi) & = & \frac{1}{2} (- \theta (\xi) + 2 i (f (\bar{\rho}) -
    \bar{\rho}) \xi_1)\\
    \lambda_2 (\xi) & = & \frac{1}{2} (\theta (\xi) + 2 i (f (\bar{\rho}) -
    \bar{\rho}) \xi_1),
  \end{eqnarray*}
  with the associated eigenvectors
  \[ V_1 (\xi) \assign \left(\begin{array}{c}
       \frac{f (\bar{\rho})}{2} (\theta (\xi) + 2 i \bar{\rho} \xi_1)\\
       1
     \end{array}\right), V_2 (\xi) \assign \left(\begin{array}{c}
       \frac{f (\bar{\rho})}{2} (- \theta (\xi) + 2 i \bar{\rho} \xi_1)\\
       1
     \end{array}\right) \]
  and we have
  \[ A (\xi) = P \left(\begin{array}{cc}
       \lambda_1 (\xi) & 0\\
       0 & \lambda_2 (\xi)
     \end{array}\right) P^{- 1} \]
  where
  \[ P  = \left(\begin{array}{cc}
       \frac{f (\bar{\rho})}{2} (\theta (\xi) + 2 i \bar{\rho} \xi_1) &
       \frac{f (\bar{\rho})}{2} (- \theta (\xi) + 2 i \bar{\rho} \xi_1)\\
       1 & 1
     \end{array}\right) \]
  and
  \[ P^{- 1} = \left(\begin{array}{cc}
       \frac{1}{f (\bar{\rho}) \theta (\xi)} & \frac{\theta (\xi) - 2 i
       \bar{\rho} \xi_1}{\theta (\xi)}\\
       \frac{- 1}{f (\bar{\rho}) \theta (\xi)} & \frac{\theta (\xi) + 2 i
       \bar{\rho} \xi_1}{\theta (\xi)}
     \end{array}\right) . \]
\end{lemma}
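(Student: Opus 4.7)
The statement is a purely algebraic assertion about a fixed $2\times 2$ matrix depending on $\xi$, so I would prove it by direct computation, handling each claim in sequence. Since $\theta(\xi) \neq 0$, once the claimed formulas for $\lambda_1(\xi)$ and $\lambda_2(\xi)$ are verified the two eigenvalues are automatically distinct, and diagonalizability of $A(\xi)$ follows for free.

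For the eigenvalues, the plan is to compute $\operatorname{tr}(A(\xi)) = 2i(f(\bar{\rho}) - \bar{\rho})\xi_1$ and
\[
\det(A(\xi)) = -f(\bar{\rho})(f(\bar{\rho}) - 2\bar{\rho})\xi_1^2 - \bar{\rho}f(\bar{\rho})|\xi|^2,
\]
so that $\lambda$ solves $\lambda^2 - \operatorname{tr}(A(\xi))\lambda + \det(A(\xi)) = 0$. Expanding the discriminant $\operatorname{tr}(A(\xi))^2 - 4\det(A(\xi))$, the $\xi_1^2$ contributions collapse to $-4\bar{\rho}^2\xi_1^2$ (the $f^2$ and $f\bar{\rho}$ terms cancel) and the remaining piece is $4\bar{\rho}f(\bar{\rho})|\xi|^2$. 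Together these reproduce exactly $\theta(\xi)^2$ by \eqref{thetaEq}, and the quadratic formula then yields the claimed $\lambda_1(\xi)$ and $\lambda_2(\xi)$.

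For the eigenvectors, I would take the ansatz $V_k = (a_k, 1)^T$: the second row of the equation $A(\xi)V_k = \lambda_k V_k$ reads $-a_k/f(\bar{\rho}) + if(\bar{\rho})\xi_1 = \lambda_k$, which together with the explicit form of $\lambda_k$ forces $a_k$ to take exactly the values announced for $V_1$ and $V_2$. A direct substitution then verifies the first row as well, using only the identity $\theta(\xi)^2 = 4(f(\bar{\rho})\bar{\rho}|\xi|^2 - \bar{\rho}^2\xi_1^2)$ and the elementary identity $(\alpha+\beta)(\alpha-\beta) = \alpha^2 - \beta^2$. Finally, $P^{-1}$ follows from the standard $2\times 2$ inversion formula: since the bottom row of $P$ is $(1,1)$, one has $\det(P) = f(\bar{\rho})\theta(\xi)$, and swapping diagonal entries, negating off-diagonal entries, and dividing by $\det(P)$ produces the stated expression.

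Every step is elementary, so the only genuine difficulty is bookkeeping: tracking signs and the numerical factors $2$, $\bar{\rho}$, and $f(\bar{\rho})$ hiding inside $\theta(\xi)$. I do not anticipate any conceptual obstacle.
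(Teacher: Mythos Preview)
Your approach is correct and matches the paper's, which simply states ``From classical computations we check the following lemma'' without giving details; your trace/determinant computation, the ansatz $V_k=(a_k,1)^T$ read off from the second row, and the $2\times2$ inversion formula are exactly the classical computations intended. One caveat: carrying out your final step with $\det(P)=f(\bar\rho)\theta(\xi)$ actually yields $(\theta(\xi)\mp 2i\bar\rho\xi_1)/(2\theta(\xi))$ in the right column of $P^{-1}$, so the formula printed in the lemma appears to be off by a factor of $2$ in those two entries---your procedure is right, but it does not reproduce the stated expression verbatim.
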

Now, if $\theta (\xi) \neq 0$ we define the vector
\[ \left(\begin{array}{c}
     \hat{u} (\xi, t)\\
     \hat{v} (\xi, t)
   \end{array}\right) \assign P^{- 1} \left(\begin{array}{c}
     \hat{\psi} (\xi, t)\\
     \hat{\varphi} (\xi, t)
   \end{array}\right) \]
and then
\[ \partial_t \left(\begin{array}{c}
     \hat{u}\\
     \hat{v}
   \end{array}\right) = \left(\begin{array}{cc}
     \lambda_1 (\xi) & 0\\
     0 & \lambda_2 (\xi)
   \end{array}\right) \left(\begin{array}{c}
     \hat{u}\\
     \hat{v}
   \end{array}\right) . \]
This decouples the two equations. We have $\partial_t \hat{u} = \lambda_1
(\xi) \hat{u}$ that we integrate forward in time and $\partial_t \hat{v} =
\lambda_2 (\xi) \hat{v}$ that we integrate backward in time. That is,
\begin{equation}
  \hat{u} (\xi, t) = e^{\lambda_1 (\xi) t} \hat{u}_0 (\xi), \hat{v} (\xi, t) =
  e^{- \lambda_2 (\xi) (T - t)} \hat{v}_T (\xi) . \label{focus}
\end{equation}
\subsection{Computation of $\hat{u}_0$ and $\hat{v}_T$}
We recall that $\theta (\xi) = 2 \sqrt{\bar{\rho} (f (\bar{\rho}) -
\bar{\rho}) | \xi_1 |^2 + \bar{\rho} f (\bar{\rho}) | \xi_2 |^2}$. Therefore,
if $f (\bar{\rho}) - \bar{\rho} > 0$ (that is $\bar{\rho} <
\frac{\rho_m}{2}$), there exists a constant $c > 0$ such that
\begin{equation}
  \frac{| \xi |}{c} \geqslant \theta (\xi) \geqslant c | \xi | . \label{rnr}
\end{equation}
\begin{lemma}
  \label{london}Suppose that $\bar{\rho} < \frac{\rho_m}{2}$. Given
  $\hat{\psi}_0 (\xi), \hat{\varphi}_T (\xi)$, define the functions for $\xi
  \neq 0$
  \[ \hat{u}_0 (\xi) = \frac{(\theta (\xi)^2 + 4 \bar{\rho} \xi_1^2) \left(
     \frac{2 e^{- \lambda_2 (\xi) T}}{\theta (\xi) + 2 i \bar{\rho} \xi_1}
     \hat{\varphi}_T (\xi) + \frac{2}{f (\bar{\rho}) (\theta^2 (\xi) + 4
     \bar{\rho}^2 \xi_1^2)} \hat{\psi}_0 (\xi) \right)}{\theta (\xi) (1 + e^{-
     \theta (\xi) T}) + 2 i \bar{\rho} \xi_1 (1 - e^{- \theta (\xi) T})} \]
  and
  \[ \hat{v}_T (\xi) = \frac{(\theta (\xi)^2 + 4 \bar{\rho} \xi_1^2) \left(
     \frac{2 \hat{\varphi}_T (\xi)}{\theta (\xi) - 2 i \bar{\rho} \xi_1} -
     \frac{2 e^{\lambda_1 (\xi) T}}{f (\bar{\rho}) (\theta^2 (\xi) + 4
     \bar{\rho}^2 \xi_1^2)} \hat{\psi}_0 (\xi) \right)}{\theta (\xi) (1 + e^{-
     \theta (\xi) T}) + 2 i \bar{\rho} \xi_1 (1 - e^{- \theta (\xi) T})} . \]
  Then, the functions
  \[ \left(\begin{array}{c}
       \hat{\psi} (\xi, t)\\
       \hat{\varphi} (\xi, t)
     \end{array}\right) = P \left(\begin{array}{c}
       e^{\lambda_1 (\xi) t} \hat{u}_0 (\xi)\\
       e^{- \lambda_2 (\xi) (T - t)} \hat{v}_T (\xi)
     \end{array}\right), \]
  where $P, \lambda_1, \lambda_2$ are defined in Lemma \ref{etiq}, are
  solutions of equation (\ref{poissonrocher}) on $[0, T]$ for $\xi \neq 0$
  with $\hat{\psi} (\xi, 0) = \hat{\psi}_0 (\xi), \hat{\varphi} (\xi, T) =
  \hat{\varphi}_T (\xi)$.
\end{lemma}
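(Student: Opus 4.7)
The proof is essentially a verification that the explicit formulas for $\hat{u}_0$ and $\hat{v}_T$ arise from solving the two-point boundary value problem once the system has been diagonalized. My plan is as follows.

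\textbf{Step 1: Use the diagonalization to decouple the system.} By Lemma \ref{etiq}, for any $\xi$ with $\theta(\xi)\neq 0$ one has $A(\xi)=P\,\mathrm{diag}(\lambda_1(\xi),\lambda_2(\xi))\,P^{-1}$. Setting $(\hat{u},\hat{v})^T=P^{-1}(\hat{\psi},\hat{\varphi})^T$, the Fourier-transformed system reduces to the two independent scalar ODEs $\partial_t\hat{u}=\lambda_1(\xi)\hat{u}$ and $\partial_t\hat{v}=\lambda_2(\xi)\hat{v}$. Integrating the first forward from $t=0$ and the second backward from $t=T$ yields exactly \eqref{focus}, so the ansatz is automatically a solution of the ODE in Fourier space for any choice of the parameters $\hat{u}_0(\xi)$ and $\hat{v}_T(\xi)$. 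Step 1 therefore reduces the problem to choosing $\hat{u}_0$ and $\hat{v}_T$ so that the forward data at $t=0$ and the backward data at $t=T$ match the prescribed $\hat{\psi}_0,\hat{\varphi}_T$.

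\textbf{Step 2: Translate the boundary data into a $2\times 2$ linear system.} Applying $P$ at $t=0$ and $t=T$ and using the explicit entries from Lemma \ref{etiq}, the boundary conditions $\hat{\psi}(\xi,0)=\hat{\psi}_0(\xi)$, $\hat{\varphi}(\xi,T)=\hat{\varphi}_T(\xi)$ give
\begin{align*}
\tfrac{f(\bar{\rho})}{2}(\theta+2i\bar{\rho}\xi_1)\,\hat{u}_0+\tfrac{f(\bar{\rho})}{2}(-\theta+2i\bar{\rho}\xi_1)e^{-\lambda_2 T}\,\hat{v}_T &= \hat{\psi}_0,\\
e^{\lambda_1 T}\,\hat{u}_0+\hat{v}_T &= \hat{\varphi}_T.
\end{align*}
Using $\lambda_1-\lambda_2=-\theta$, a direct expansion of the determinant of this system gives
\[
D(\xi)=\tfrac{f(\bar{\rho})}{2}\bigl[\theta(\xi)(1+e^{-\theta(\xi)T})+2i\bar{\rho}\xi_1(1-e^{-\theta(\xi)T})\bigr],
\]
which is precisely the denominator appearing in the stated formulas (up to the $\tfrac{f(\bar{\rho})}{2}$ factor that is absorbed into the numerator).

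\textbf{Step 3: Check nondegeneracy of $D$ under $\bar{\rho}<\rho_m/2$.} This is the one place where the condition on $\bar{\rho}$ really enters. Under $\bar{\rho}<\rho_m/2$, the bound \eqref{rnr} gives $\theta(\xi)>0$ strictly for $\xi\neq 0$, so $e^{-\theta(\xi)T}\in(0,1)$ and the real part $\tfrac{f(\bar{\rho})}{2}\theta(1+e^{-\theta T})$ is strictly positive; hence $D(\xi)\neq 0$ and the linear system is uniquely solvable. This is the only slightly delicate point in the argument — without this sign information, the denominator could vanish and the inversion would be impossible, mirroring at the linear level the obstruction in the regime $\bar{\rho}>\rho_m/2$ where planar waves persist (Lemma \ref{vortex}).

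\textbf{Step 4: Apply Cramer's rule and match the formulas.} Solving the $2\times 2$ system by Cramer's rule produces closed-form expressions for $\hat{u}_0(\xi)$ and $\hat{v}_T(\xi)$. The last task is a purely algebraic rewriting: multiplying numerator and denominator by $(\theta\mp 2i\bar{\rho}\xi_1)$ to symmetrize the denominators, and using $\theta^2+4\bar{\rho}^2\xi_1^2=(\theta+2i\bar{\rho}\xi_1)(\theta-2i\bar{\rho}\xi_1)$, converts the Cramer expressions into the form stated in the lemma. Finally, by construction, $(\hat{\psi},\hat{\varphi})(\xi,t)=P(e^{\lambda_1 t}\hat{u}_0,e^{-\lambda_2(T-t)}\hat{v}_T)^T$ solves the Fourier-transformed system \eqref{poissonrocher} and attains the prescribed boundary values, completing the proof. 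The main obstacle is not computational but conceptual, namely the nondegeneracy of $D(\xi)$ in Step 3; everything else is a routine verification.
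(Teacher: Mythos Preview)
Your proposal is correct and follows essentially the same strategy as the paper: diagonalize via $P$, integrate the decoupled modes, and reduce the boundary conditions to a $2\times 2$ linear system for $(\hat u_0,\hat v_T)$ whose determinant is nonzero precisely when $\bar\rho<\rho_m/2$. The only cosmetic difference is that you obtain the $2\times 2$ system directly by reading off the first row of $P$ at $t=0$ and the second row at $t=T$, whereas the paper applies $P^{-1}$ at both endpoints and then eliminates the unknowns $\hat\varphi_0,\hat\psi_T$; the resulting systems are row-equivalent and yield the same Cramer formulas.
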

\begin{proof}
  From the definition of $\hat{u}, \hat{v}$ and (\ref{focus}), we compute that
  \[ \left(\begin{array}{c}
       \hat{u}_0 (\xi)\\
       e^{- \lambda_2 (\xi) T} \hat{v}_T (\xi)
     \end{array}\right) = P^{- 1} \left(\begin{array}{c}
       \hat{\psi}_0 (\xi)\\
       \hat{\varphi}_0 (\xi)
     \end{array}\right) \]
  and
  \[ \left(\begin{array}{c}
       e^{\lambda_1 (\xi) t} \hat{u}_0 (\xi)\\
       \hat{v}_T (\xi)
     \end{array}\right) = P^{- 1} \left(\begin{array}{c}
       \hat{\psi}_T (\xi)\\
       \hat{\varphi}_T (\xi)
     \end{array}\right) \]
  where $\hat{\varphi}_0 (\xi) = \hat{\varphi} (\xi, 0)$ and $\hat{\psi}_T
  (\xi) = \hat{\psi} (\xi, T)$ are not given. We want to remove them from the
  system.
  \
  The first system can be written by Lemma \ref{etiq} as
  \[ \left\{\begin{array}{l}
       \hat{u}_0 (\xi) = \frac{1}{f (\bar{\rho}) \theta (\xi)} \hat{\psi}_0
       (\xi) + \frac{\theta (\xi) - 2 i \bar{\rho} \xi_1}{\theta (\xi)}
       \hat{\varphi}_0 (\xi)\\\\
       e^{- \lambda_2 (\xi) T} \hat{v}_T (\xi) = \frac{- 1}{f (\bar{\rho})
       \theta (\xi)} \hat{\psi}_0 (\xi) + \frac{\theta (\xi) + 2 i \bar{\rho}
       \xi_1}{\theta (\xi)} \hat{\varphi}_0 (\xi)
     \end{array}\right. \]
  and therefore
  \begin{eqnarray*}
    &  & \frac{1}{\theta (\xi) - 2 i \bar{\rho} \xi_1} \hat{u}_0 (\xi) -
    \frac{1}{\theta (\xi) + 2 i \bar{\rho} \xi_1} e^{- \lambda_2 (\xi) T}
    \hat{v}_T (\xi)\\
    & = & \frac{2}{f (\bar{\rho}) (\theta^2 (\xi) + 4 \bar{\rho}^2 \xi_1^2)}
    \hat{\psi}_0 (\xi) .
  \end{eqnarray*}
  The second system can be written by Lemma \ref{etiq} as
  \[ \left\{\begin{array}{l}
       e^{\lambda_1 (\xi) T} \hat{u}_0 (\xi) = \frac{1}{f (\bar{\rho}) \theta
       (\xi)} \hat{\psi}_T (\xi) + \frac{\theta (\xi) - 2 i \bar{\rho}
       \xi_1}{\theta (\xi)} \hat{\varphi}_T (\xi)\\\\
       \hat{v}_T (\xi) = \frac{- 1}{f (\bar{\rho}) \theta (\xi)} \hat{\psi}_T
       (\xi) + \frac{\theta (\xi) + 2 i \bar{\rho} \xi_1}{\theta (\xi)}
       \hat{\varphi}_T (\xi)
     \end{array}\right. \]
  and therefore summing the two equations yield
  \[ e^{\lambda_1 (\xi) T} \hat{u}_0 (\xi) + \hat{v}_T (\xi) = 2
     \hat{\varphi}_T (\xi) . \]
  We deduce that
  \[ \left(\begin{array}{cc}
       \frac{1}{\theta (\xi) - 2 i \bar{\rho} \xi_1} & \frac{- e^{- \lambda_2
       (\xi) T}}{\theta (\xi) + 2 i \bar{\rho} \xi_1}\\
       e^{\lambda_1 (\xi) T} & 1
     \end{array}\right) \left(\begin{array}{c}
       \hat{u}_0 (\xi)\\
       \hat{v}_T (\xi)
     \end{array}\right) = \left(\begin{array}{c}
       \frac{2}{f (\bar{\rho}) (\theta^2 (\xi) + 4 \bar{\rho}^2 \xi_1^2)}
       \hat{\psi}_0 (\xi)\\
       2 \hat{\varphi}_T (\xi)
     \end{array}\right) . \]
  We have
  \[ D (\xi) = \tmop{Det} \left(\begin{array}{cc}
       \frac{1}{\theta (\xi) - 2 i \bar{\rho} \xi_1} & \frac{- e^{- \lambda_2
       (\xi) T}}{\theta (\xi) + 2 i \bar{\rho} \xi_1}\\
       e^{\lambda_1 (\xi) T} & 1
     \end{array}\right) = \frac{\theta (\xi) (1 + e^{- \theta (\xi) T}) + 2 i
     \bar{\rho} \xi_1 (1 - e^{- \theta (\xi) T})}{\theta (\xi)^2 + 4
     \bar{\rho} \xi_1^2} \neq 0, \]
  for $\xi \neq 0$ since $\theta (\xi)$ is real. Therefore,
  \begin{eqnarray*}
    &  & \left(\begin{array}{c}
      \hat{u}_0 (\xi)\\
      \hat{v}_T (\xi)
    \end{array}\right)\\
    & = & \frac{1}{D (\xi)} \left(\begin{array}{cc}
      1 & \frac{e^{- \lambda_2 (\xi) T}}{\theta (\xi) + 2 i \bar{\rho}
      \xi_1}\\
      - e^{\lambda_1 (\xi) T} & \frac{1}{\theta (\xi) - 2 i \bar{\rho} \xi_1}
    \end{array}\right) \left(\begin{array}{c}
      \frac{2}{f (\bar{\rho}) (\theta^2 (\xi) + 4 \bar{\rho}^2 \xi_1^2)}
      \hat{\psi}_0 (\xi)\\
      2 \hat{\varphi}_T (\xi)
    \end{array}\right)\\
    & = & \frac{1}{D (\xi)} \left(\begin{array}{c}
      \frac{2 e^{- \lambda_2 (\xi) T}}{\theta (\xi) + 2 i \bar{\rho} \xi_1}
      \hat{\varphi}_T (\xi) + \frac{2}{f (\bar{\rho}) (\theta^2 (\xi) + 4
      \bar{\rho}^2 \xi_1^2)} \hat{\psi}_0 (\xi)\\
      \frac{2 \hat{\varphi}_T (\xi)}{\theta (\xi) - 2 i \bar{\rho} \xi_1} -
      \frac{2 e^{\lambda_1 (\xi) T}}{f (\bar{\rho}) (\theta^2 (\xi) + 4
      \bar{\rho}^2 \xi_1^2)} \hat{\psi}_0 (\xi)
    \end{array}\right) .
  \end{eqnarray*}
  This concludes the proof of the lemma.
\end{proof}
\subsection{End of the proof of Proposition \ref{nina}}
We show the first lemma.
\begin{lemma}
  \label{rezident}Suppose that $\bar{\rho} < \frac{\rho_m}{2}$. Then, there exists $K, c > 0$ such that for any $T > 0$, system (\ref{losttimes}) admits a solution on $[0, T]$ that satisfies
  \[
   (| \hat{\psi} | + | \xi \hat{\varphi} |) (\xi, t) \leqslant K (e^{- c |\xi | t} | \hat{\psi}_0 | + e^{- c | \xi | (T - t)} | \xi \hat{\varphi}_T|) .
   \]
\end{lemma}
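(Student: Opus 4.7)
The plan is to use the explicit representation provided by Lemma \ref{london} together with the eigenvalue structure from Lemma \ref{etiq}, and to turn everything into frequency-wise estimates via the crucial lower bound $\theta(\xi) \geqslant c|\xi|$ from (\ref{rnr}) which encodes the hypothesis $\bar{\rho} < \rho_m/2$.

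First I would read off from Lemma \ref{london} and the formula for $P$ in Lemma \ref{etiq} the pointwise bound
\[
|\hat{\psi}(\xi,t)| + |\xi \hat{\varphi}(\xi,t)| \leqslant C |\xi| \bigl( e^{-\theta(\xi) t/2} |\hat{u}_0(\xi)| + e^{-\theta(\xi)(T-t)/2} |\hat{v}_T(\xi)| \bigr),
\]
using that each entry of $P$ has modulus at most $C|\xi|$ since $|\theta(\xi) \pm 2i \bar{\rho} \xi_1| \leqslant C|\xi|$, and that $|e^{\lambda_1 t}| = e^{-\theta t/2}$ and $|e^{-\lambda_2(T-t)}| = e^{-\theta(T-t)/2}$ because $\theta$ is real and positive.

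Next I would derive pointwise bounds on $|\hat{u}_0|$ and $|\hat{v}_T|$. The central estimate is a uniform-in-$T$ lower bound on the common denominator
\[
D(\xi) = \frac{\theta(\xi)(1 + e^{-\theta(\xi) T}) + 2 i \bar{\rho} \xi_1 (1 - e^{-\theta(\xi) T})}{\theta(\xi)^2 + 4 \bar{\rho}^2 \xi_1^2}
\]
appearing in Lemma \ref{london}. Since $\theta$ is real and $(1 + e^{-\theta T})^2 \geqslant 1$, the squared modulus of the numerator of $D$ is bounded below by $\theta^2$, while its denominator is $\leqslant C|\xi|^2$. Combined with $\theta(\xi) \geqslant c|\xi|$, this yields $|D(\xi)| \geqslant c/|\xi|$ uniformly in $T > 0$. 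Routine algebra in the formulas of Lemma \ref{london}, using also $|\theta \pm 2i\bar{\rho}\xi_1| \geqslant c|\xi|$ and $|e^{-\lambda_2 T}| = |e^{\lambda_1 T}| = e^{-\theta T/2}$, then delivers
\[
|\hat{u}_0(\xi)| \leqslant C \Bigl( \frac{|\hat{\psi}_0(\xi)|}{|\xi|} + e^{-\theta T/2} |\hat{\varphi}_T(\xi)| \Bigr), \quad |\hat{v}_T(\xi)| \leqslant C \Bigl( |\hat{\varphi}_T(\xi)| + \frac{e^{-\theta T/2}}{|\xi|} |\hat{\psi}_0(\xi)| \Bigr).
\]

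Plugging these into the first bound, the diagonal contributions are already $e^{-\theta t/2}|\hat{\psi}_0(\xi)|$ and $|\xi| e^{-\theta(T-t)/2}|\hat{\varphi}_T(\xi)|$, while the cross contributions produce factors $|\xi| e^{-\theta(T+t)/2}|\hat{\varphi}_T(\xi)|$ and $e^{-\theta(2T-t)/2}|\hat{\psi}_0(\xi)|$; for $t \in [0,T]$ these are dominated by the corresponding diagonal terms since $T+t \geqslant T-t$ and $2T-t \geqslant t$. A final application of $\theta(\xi) \geqslant c|\xi|$ converts $e^{-\theta s/2}$ into $e^{-c|\xi|s}$ and yields the stated estimate. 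The main potential pitfall is precisely the $T$-independent lower bound on $|D(\xi)|$: the simple observation $(1 + e^{-\theta T})^2 \geqslant 1$, valid for all $T \geqslant 0$, avoids any case split between small and large $T$ and is what makes the constants $K, c$ independent of the time horizon.
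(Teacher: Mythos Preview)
Your proposal is correct and follows essentially the same route as the paper: bound $|\hat{\psi}|+|\xi\hat{\varphi}|$ by $|\xi\hat{u}_0|,|\xi\hat{v}_T|$ via the structure of $P$, then use the explicit formulas of Lemma \ref{london} to bound those in terms of $\hat{\psi}_0,\xi\hat{\varphi}_T$, absorbing the cross terms since $t\in[0,T]$. Your explicit verification of the $T$-uniform lower bound $|D(\xi)|\geqslant c/|\xi|$ via $(1+e^{-\theta T})^2\geqslant 1$ is in fact more detailed than the paper, which simply asserts the resulting bounds on $|\xi\hat{u}_0|,|\xi\hat{v}_T|$.
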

\begin{proof}
  We recall that $\lambda_1 (\xi) = \frac{1}{2} (- \theta (\xi) + 2 i (f
  (\bar{\rho}) - \bar{\rho}) \xi_1)$ and $\lambda_2 (\xi) = \frac{1}{2}
  (\theta (\xi) + 2 i (f (\bar{\rho}) - \bar{\rho}) \xi_1)$, therefore
  \[ \frac{- | \xi |}{c} \leqslant \mathfrak{R}\mathfrak{e} (\lambda_1 (\xi))
     \leqslant - c | \xi | \]
  and
  \[ \frac{- | \xi |}{c} \leqslant \mathfrak{R}\mathfrak{e} (- \lambda_2
     (\xi)) \leqslant - c | \xi | . \]
  We recall that
  \[ \left(\begin{array}{c}
       \hat{\psi} (\xi, t)\\
       \hat{\varphi} (\xi, t)
     \end{array}\right) = P \left(\begin{array}{c}
       e^{\lambda_1 (\xi) t} \hat{u}_0 (\xi)\\
       e^{- \lambda_2 (\xi) (T - t)} \hat{v}_T (\xi)
     \end{array}\right) \]
  and
  \[ P = \left(\begin{array}{cc}
       \frac{f (\bar{\rho})}{2} (\theta (\xi) + 2 i \bar{\rho} \xi_1) &
       \frac{f (\bar{\rho})}{2} (- \theta (\xi) + 2 i \bar{\rho} \xi_1)\\
       1 & 1
     \end{array}\right), \]
  therefore
  \[ | \hat{\psi} | (\xi, t) \leqslant K (e^{- c | \xi | t} | \xi \hat{u}_0
     (\xi) | + e^{- c | \xi | (T - t)} | \xi \hat{v}_T (\xi) |) \]
  and with Lemma \ref{london} we check that
  \[ | \xi \hat{u}_0 (\xi) | \leqslant K (e^{- c | \xi | T} | \xi
     \hat{\varphi}_T (\xi) | + | \hat{\psi}_0 (\xi) |), \]
  \[ | \xi \hat{v}_T (\xi) | \leqslant K (| \xi \hat{\varphi}_T (\xi) | + e^{-
     c | \xi | T} | \hat{\psi}_0 (\xi) |) . \]
  Since $t \leqslant T$ we deduce that
  \[ | \hat{\psi} | (\xi, t) \leqslant K (e^{- c | \xi | t} | \hat{\psi}_0
     (\xi) | + e^{- c | \xi | (T - t)} | \xi \hat{\varphi}_T (\xi) |) . \]
  We now check that
  \[ | \widehat{\nabla \varphi} | (\xi, t) = | \xi \hat{\varphi} (\xi, t) |
     \leqslant K (e^{- c | \xi | t} | \xi \hat{u}_0 (\xi) | + e^{- c | \xi |
     (T - t)} | \xi \hat{v}_T (\xi) |) \]
  and we can conclude similarly.
\end{proof}

Proposition \ref{nina} follows then directly from classical computations.
\section{Proof of Theorem \ref{aiaiaia}}\label{alive}
\subsection{The Duhamel formulation}\label{embrz}
We write the system (\ref{mq1})-(\ref{mq2}) with $\sigma = 0$ under the form
\[ \left\{\begin{array}{l}
     \partial_t \psi = (f (\bar{\rho}) - 2 \bar{\rho}) \partial_x \psi +
     \bar{\rho} f^2 (\bar{\rho}) \Delta \varphi + \tmop{NL}_1 (\psi, \nabla
     \varphi)\\
     \partial_t \varphi = f (\bar{\rho}) \partial_x \varphi - \frac{1}{f
     (\bar{\rho})} \psi + \tmop{NL}_2 (\psi, \nabla \varphi)
   \end{array}\right. \]
where
\begin{eqnarray*}
  \tmop{NL}_1 (\psi, \varphi) & \assign & \left( \frac{\bar{\rho}}{f
  (\bar{\rho})} - 2 \right) \partial_x (\psi^2) + \frac{1}{f (\bar{\rho})}
  \partial_x (\psi^3)\\
  & + & \nabla . ((\psi f^2 (\bar{\rho}) - 2 f (\bar{\rho}) (\bar{\rho} +
  \psi) \psi + (\bar{\rho} + \rho) \psi^2) \nabla \varphi)
\end{eqnarray*}
and
\begin{eqnarray*}
  \tmop{NL}_2 (\psi, \varphi) & \assign & \frac{1}{2} f^2 (\bar{\rho}) |
  \nabla \varphi |^2 + \frac{1}{2} (\psi^2 - 2 f (\bar{\rho}) \psi) | \nabla
  \varphi |^2\\
  & + & \frac{1}{2 f (\bar{\rho})} \psi^2 + \partial_x \varphi \left(
  \frac{1}{f (\bar{\rho})} \psi^2 - 2 \psi \right) .
\end{eqnarray*}
Taking the Fourier transform of this system, we infer that
\[ \partial_t \left(\begin{array}{c}
     \hat{\psi}\\
     \hat{\varphi}
   \end{array}\right) = A (\xi) \left(\begin{array}{c}
     \hat{\psi}\\
     \hat{\varphi}
   \end{array}\right) + \left(\begin{array}{c}
     \widehat{\tmop{NL}_1} (\psi, \varphi)\\
     \widehat{\tmop{NL}_2} (\psi, \varphi)
   \end{array}\right), \]
where the matrix $A$ is defined in (\ref{matA}). Therefore, defining
\begin{equation}
  \left(\begin{array}{c}
    \hat{u}\\
    \hat{v}
  \end{array}\right) = P^{- 1} \left(\begin{array}{c}
    \hat{\psi}\\
    \hat{\varphi}
  \end{array}\right)
\end{equation}
($P$ is defined in Lemma \ref{etiq}), we have
\[ \partial_t \left(\begin{array}{c}
     | \xi | \hat{u}\\
     | \xi | \hat{v}
   \end{array}\right) = \left(\begin{array}{cc}
     \lambda_1 (\xi) & 0\\
     0 & \lambda_2 (\xi)
   \end{array}\right) \left(\begin{array}{c}
     | \xi | \hat{u}\\
     | \xi | \hat{v}
   \end{array}\right) + | \xi | P^{- 1} \left(\begin{array}{c}
     \widehat{\tmop{NL}_1} (\psi, \varphi)\\
     \widehat{\tmop{NL}_2} (\psi, \varphi)
   \end{array}\right) . \]
This leads to the following Duhamel formulation.
\begin{lemma}
  We define the functional operators
  \[ S_1 (\hat{\psi}, \hat{\varphi}) (\xi, t) \assign \int_0^t e^{\lambda_1
     (\xi) (t - s)} | \xi | \left( P^{- 1} \left(\begin{array}{c}
       \widehat{\tmop{NL}_1} (\psi, \varphi)\\
       \widehat{\tmop{NL}_2} (\psi, \varphi)
     \end{array}\right) (\xi, s) . \left(\begin{array}{c}
       1\\
       0
     \end{array}\right) \right) d s \]
  and
  \[ S_2 (\hat{\psi}, \hat{\varphi}) (\xi, t) \assign \int_T^{T - t}
     e^{\lambda_2 (\xi) (T - t - s)} | \xi | \left( P^{- 1}
     \left(\begin{array}{c}
       \widehat{\tmop{NL}_1} (\psi, \varphi)\\
       \widehat{\tmop{NL}_2} (\psi, \varphi)
     \end{array}\right) (\xi, s) . \left(\begin{array}{c}
       0\\
       1
     \end{array}\right) \right) d s. \]
  Then, a solution of equations (\ref{mq1})-(\ref{mq2}) can be written in the
  form
  \[ \left(\begin{array}{c}
       \hat{\psi}\\
       \hat{\varphi}
     \end{array}\right) (\xi, t) = P (\xi) \left(\begin{array}{c}
       \hat{u}\\
       \hat{v}
     \end{array}\right) (\xi, t), \]
  \[ \left(\begin{array}{c}
       | \xi | \hat{u}\\
       | \xi | \hat{v}
     \end{array}\right) (\xi, t) = \left(\begin{array}{c}
       e^{\lambda_1 (\xi) t} | \xi | \hat{u}_0 (\xi)\\
       e^{\lambda_1 (\xi) (T - t)} | \xi | \hat{v}_T (\xi)
     \end{array}\right) + \left(\begin{array}{c}
       S_1 (\hat{\psi}, \hat{\varphi}) (\xi, t)\\
       S_2 (\hat{\psi}, \hat{\varphi}) (\xi, t)
     \end{array}\right) . \]
  That is, if there exists ($\hat{\psi}, \hat{\varphi}$) solution of this
  problem, then it also solves \ (\ref{mq1})-(\ref{mq2}).
\end{lemma}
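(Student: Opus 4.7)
The statement is essentially the variation-of-constants (Duhamel) formula applied after the diagonalization of Lemma \ref{etiq}, so the plan is mostly bookkeeping. Starting from the Fourier-transformed nonlinear system
\[
\partial_t \begin{pmatrix} \hat{\psi} \\ \hat{\varphi} \end{pmatrix} = A(\xi) \begin{pmatrix} \hat{\psi} \\ \hat{\varphi} \end{pmatrix} + \begin{pmatrix} \widehat{\mathrm{NL}_1}(\psi, \varphi) \\ \widehat{\mathrm{NL}_2}(\psi, \varphi) \end{pmatrix},
\]
I would apply $P^{-1}(\xi)$ on the left and use the factorization $A = P\,\mathrm{diag}(\lambda_1,\lambda_2)\,P^{-1}$ from Lemma \ref{etiq} to decouple the linear part. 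Multiplying through by $|\xi|$ (which commutes with the constant-in-$t$ matrix $P^{-1}$) yields a pair of scalar inhomogeneous ODEs
\[
\partial_t\bigl(|\xi|\hat u\bigr) = \lambda_1(\xi)\bigl(|\xi|\hat u\bigr) + g_1, \qquad \partial_t\bigl(|\xi|\hat v\bigr) = \lambda_2(\xi)\bigl(|\xi|\hat v\bigr) + g_2,
\]
where $g_i(\xi,t)$ is the $i$-th component of $|\xi|\,P^{-1}\bigl(\widehat{\mathrm{NL}_1},\widehat{\mathrm{NL}_2}\bigr)^{\!\top}$.

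The next step is to solve these two scalar equations using Duhamel's formula, exploiting the forward/backward structure dictated by the signs of $\mathfrak{Re}(\lambda_i)$ established in Lemma \ref{rezident}. Since $\mathfrak{Re}(\lambda_1(\xi)) < 0$, the propagator $e^{\lambda_1(\xi)(t-s)}$ is bounded for $0\le s\le t$, so I integrate the first equation forward from $t=0$ using the datum $|\xi|\hat u_0(\xi)$; this gives precisely the expression defining $S_1$. Symmetrically, because $\mathfrak{Re}(-\lambda_2(\xi)) < 0$, the second equation must be integrated backward from $t = T$ with the terminal datum $|\xi|\hat v_T(\xi)$, producing $S_2$ after the obvious reversal of the integration limits (this is why the integral in $S_2$ runs from $T$, with the shifted exponent matching the homogeneous evolution identified in \eqref{focus}).

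Having the Duhamel expressions for $(|\xi|\hat u,|\xi|\hat v)$, I then divide by $|\xi|$ and apply $P(\xi)$ to recover $(\hat\psi,\hat\varphi)$, and differentiate the resulting formula in $t$ under the integral sign using $\partial_t e^{\lambda_i t} = \lambda_i e^{\lambda_i t}$ together with the fundamental theorem of calculus for the boundary term. This yields back the Fourier-transformed system, hence the original \eqref{mq1}--\eqref{mq2}, and also shows that the initial and terminal conditions are attained. The only genuine obstacle is pinning down the signs and limits in $S_2$ so that the backward evolution is consistent with the linear formula \eqref{focus}; once those conventions are fixed the verification is routine. No regularity issue arises at this stage, since the lemma is only asserting that if $(\hat\psi,\hat\varphi)$ satisfies this integral identity then it solves the PDE — the harder question of actually producing such a fixed point of the Duhamel map is deferred to the subsequent contraction argument announced in Section \ref{alive}.
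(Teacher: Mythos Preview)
Your proposal is correct and follows exactly the approach the paper takes: in fact the paper does not give a separate proof of this lemma, but simply sets up the diagonalized system
\[
\partial_t \begin{pmatrix} |\xi|\hat u \\ |\xi|\hat v \end{pmatrix}
= \begin{pmatrix} \lambda_1(\xi) & 0 \\ 0 & \lambda_2(\xi) \end{pmatrix}
\begin{pmatrix} |\xi|\hat u \\ |\xi|\hat v \end{pmatrix}
+ |\xi|\,P^{-1}\begin{pmatrix} \widehat{\mathrm{NL}_1} \\ \widehat{\mathrm{NL}_2} \end{pmatrix}
\]
in the paragraph preceding the lemma and then states the Duhamel formula as an immediate consequence. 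Your write-up is a faithful (and slightly more detailed) expansion of that same computation, including the correct rationale for integrating $\hat u$ forward and $\hat v$ backward based on the signs of $\mathfrak{Re}(\lambda_i)$.
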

\subsection{Estimates of some convolutions}\label{lane8}
We define the norm
\[ \| \hat{f} (., t) \| \assign \left\| \frac{(1 + | \xi |)^3 \hat{f}}{e^{- c
   | \xi | t} + e^{- c | \xi | (T - t)}} \right\|_{L^{\infty} (\mathbb{R}^2)}
   . \]
\begin{lemma}
  \label{themusic}There exists $K > 0$ such that for any function $f, g$ such
  that $\| \hat{f} \|, \| \hat{g} \| < + \infty$, we have
  \[ \| \widehat{(f g)} \| \leqslant K \| \hat{f} \| \| \hat{g} \| \]
\end{lemma}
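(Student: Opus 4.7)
The plan is to estimate $\widehat{fg}(\xi)=(\hat f*\hat g)(\xi)$ pointwise. Setting $\omega(\zeta,t):=(e^{-c|\zeta|t}+e^{-c|\zeta|(T-t)})/(1+|\zeta|)^3$, the definition of the norm gives $|\hat f(\xi-\eta)|\le\|\hat f\|\,\omega(\xi-\eta,t)$ and similarly for $\hat g$. The convolution inequality then yields $|\widehat{fg}(\xi)|\le\|\hat f\|\,\|\hat g\|\,I(\xi,t)$ where
\[
I(\xi,t):=\int_{\mathbb R^2}\omega(\xi-\eta,t)\,\omega(\eta,t)\,d\eta,
\]
and the lemma reduces to proving the pointwise bound $I(\xi,t)\le K\,\omega(\xi,t)$, uniformly in $\xi\in\mathbb R^2$, $t\in[0,T]$, and $T>0$; dividing by $\omega(\xi,t)$ and taking the $L^\infty$ norm in $\xi$ then delivers the desired $\|\widehat{fg}\|\le K\|\hat f\|\|\hat g\|$.

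The main difficulty, I expect, lies in handling the exponential factors. Expanding the product in the numerators of $\omega(\xi-\eta,t)\omega(\eta,t)$ produces four cross terms. The two ``diagonal'' terms of the form $e^{-cs|\xi-\eta|}e^{-cs|\eta|}$ with $s\in\{t,T-t\}$ are bounded by $e^{-cs|\xi|}$ thanks to the triangle inequality $|\xi-\eta|+|\eta|\ge|\xi|$. The genuinely subtle case is the off-diagonal term $e^{-c|\xi-\eta|t}e^{-c|\eta|(T-t)}$: a naive splitting according to whether $|\xi-\eta|$ or $|\eta|$ exceeds $|\xi|/2$ would yield decay only at the halved rate $c/2$, producing a strictly weaker norm on the left-hand side. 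To preserve the constant, I would instead use the elementary minimization
\[
t|\xi-\eta|+(T-t)|\eta|\;\ge\;\min(t,T-t)\bigl(|\xi-\eta|+|\eta|\bigr)\;\ge\;\min(t,T-t)\,|\xi|,
\]
which gives
\[
e^{-c|\xi-\eta|t}e^{-c|\eta|(T-t)}\;\le\;e^{-c\min(t,T-t)|\xi|}\;\le\;e^{-c|\xi|t}+e^{-c|\xi|(T-t)},
\]
and the other off-diagonal term is identical after the substitution $\eta\leftrightarrow\xi-\eta$. Summing the four contributions, the full numerator is dominated pointwise in $\eta$ by $4\bigl(e^{-c|\xi|t}+e^{-c|\xi|(T-t)}\bigr)$, which can be pulled out of the integral.

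What remains is the purely polynomial convolution
\[
\int_{\mathbb R^2}\frac{d\eta}{(1+|\xi-\eta|)^3(1+|\eta|)^3}\;\le\;\frac{K}{(1+|\xi|)^3}.
\]
I would split the integration at $\{|\eta|\le|\xi|/2\}$ and its complement: in the first region $|\xi-\eta|\ge|\xi|/2$, so $(1+|\xi-\eta|)^{-3}\le K(1+|\xi|)^{-3}$ factors out and the remaining $\int (1+|\eta|)^{-3}\,d\eta$ converges on $\mathbb R^2$; the complementary region is treated symmetrically. Combining this polynomial estimate with the exponential bound yields $I(\xi,t)\le K\,\omega(\xi,t)$ with $K$ independent of $\xi$, $t$, and $T$, which is precisely the estimate stated in the lemma.
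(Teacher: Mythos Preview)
Your proof is correct and follows essentially the same approach as the paper: reduce to the convolution estimate, bound the diagonal exponential products by the triangle inequality, handle the off-diagonal products by the case split $t\le T/2$ versus $t\ge T/2$ (your $\min(t,T-t)$ formulation is exactly this), and conclude with the standard polynomial convolution bound on $(1+|\cdot|)^{-3}$ in $\mathbb{R}^2$. Your write-up is in fact more explicit than the paper's on both the off-diagonal step and the polynomial splitting.
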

\begin{proof}
  We have
  \[ \widehat{(f g)} (\xi) = \int_{\mathbb{R}^2} \hat{f} (\nu) \hat{g} (\xi -
     \nu) d \nu \]
  and thus
  \begin{eqnarray*}
    &  & \frac{| \widehat{(f g)} (\xi) |}{\| \hat{f} \| \| \hat{g} \|}\\
    & \leqslant & \int_{\mathbb{R}^2} \frac{(e^{- c | \nu | t} + e^{- c | \nu
    | (T - t)}) (e^{- c | \xi - \nu | t} + e^{- c | \xi - \nu | (T - t)})}{(1
    + | \nu |)^3 (1 + | \xi - \nu |)^3} d \nu
  \end{eqnarray*}
  and we infer that
  \begin{eqnarray*}
    &  & (e^{- c | \nu | t} + e^{- c | \nu | (T - t)}) (e^{- c | \xi - \nu |
    t} + e^{- c | \xi - \nu | (T - t)})\\
    & \leqslant & K (e^{- c | \xi | t} + e^{- c | \xi | (T - t)})
  \end{eqnarray*}
  for any $\nu, \xi \in \mathbb{R}^2, 0 \leqslant t \leqslant T$. Indeed, by
  triangular inequality we have
  \[ e^{- c | \nu | t} e^{- c | \xi - \nu | t} \leqslant e^{- c | \xi | t}, \]
  and separating the cases $t \leqslant \frac{T}{2}$ and $t \geqslant
  \frac{T}{2}$ we check that
  \[ e^{- c | \nu | t} e^{- c | \xi - \nu | (T - t)} \leqslant e^{- c | \xi |
     t} + e^{- c | \xi | (T - t)} . \]
  We check that all other products can be estimates similarly.
  Furthermore, since $\frac{1}{(1 + | \xi |)^3} \in L^1 (\mathbb{R}^2)$ we
  check by standard estimates that
  \[ \int_{\mathbb{R}^2} \frac{1}{(1 + | \nu |)^3 (1 + | \xi - \nu |)^3} d \nu
     \leqslant \frac{K}{(1 + | \xi |)^3}, \]
  hence
  \[ | \widehat{(f g)} (\xi) | \leqslant K \| \hat{f} \| \| \hat{g} \|
     \frac{(e^{- c | \xi | t} + e^{- c | \xi | (T - t)})}{(1 + | \xi |)^3}, \]
  concluding the proof of the lemma.
\end{proof}
\begin{lemma}
  \label{gettingstronger}There exists $K > 0$ such that for any function $f$
  such that $\| \hat{f} \| < + \infty$, we have
  \[ \left\| \int_0^t e^{- 2 c | \xi | (t - s)} | \xi | \hat{f} (\xi, s) d s
     \right\| \leqslant K \| \hat{f} \| . \]
\end{lemma}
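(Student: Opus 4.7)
The plan is to prove this pointwise in $\xi$, by just plugging the definition of the norm into the integrand and performing two elementary exponential integrals. By the definition of $\|\cdot\|$, the hypothesis $\|\hat f\|<+\infty$ means exactly that
\[
|\hat f(\xi,s)| \leqslant \|\hat f\|\, \frac{e^{-c|\xi|s}+e^{-c|\xi|(T-s)}}{(1+|\xi|)^3},
\]
so I would first factor out $\|\hat f\|/(1+|\xi|)^3$ from the integral and reduce the problem to showing that
\[
|\xi| \int_0^t e^{-2c|\xi|(t-s)}\bigl(e^{-c|\xi|s}+e^{-c|\xi|(T-s)}\bigr)\,ds \;\leqslant\; K\bigl(e^{-c|\xi|t}+e^{-c|\xi|(T-t)}\bigr).
\]

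For the first piece, I would compute
\[
|\xi|\int_0^t e^{-2c|\xi|(t-s)}e^{-c|\xi|s}\,ds
= |\xi| e^{-2c|\xi|t}\int_0^t e^{c|\xi|s}\,ds
= \tfrac{1}{c}\bigl(e^{-c|\xi|t}-e^{-2c|\xi|t}\bigr)
\leqslant \tfrac{1}{c}\, e^{-c|\xi|t}.
\]
For the second piece, the same direct computation gives
\[
|\xi|\int_0^t e^{-2c|\xi|(t-s)}e^{-c|\xi|(T-s)}\,ds
= |\xi| e^{-2c|\xi|t-c|\xi|T}\int_0^t e^{3c|\xi|s}\,ds
= \tfrac{1}{3c}\bigl(e^{-c|\xi|(T-t)}-e^{-c|\xi|(T+2t)}\bigr)
\leqslant \tfrac{1}{3c}\, e^{-c|\xi|(T-t)}.
\]
Summing the two bounds gives exactly the desired right-hand side with $K=\max(1/c,\,1/(3c))$ (up to a harmless constant).

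There is no real obstacle here: the factor $|\xi|$ is absorbed by the extra decay $e^{-2c|\xi|(t-s)}$, which is a strictly faster exponential than either of the two profiles $e^{-c|\xi|s}$, $e^{-c|\xi|(T-s)}$ appearing in the norm, so the integrals converge and return the expected profile evaluated at the endpoint $t$. The only small subtlety to note is that the gain factor is $2c$ (not $c$), which is what makes the computation go through without creating a linear-in-$T$ or linear-in-$t$ loss; if the rate in the kernel were only $c|\xi|(t-s)$, the first piece would produce an extra factor of $t$ (the resonant case). This is why the lemma is stated with the constant $2c$, and any polynomial corrections such as $(1+|\xi|)^3$ pull through unchanged since they depend only on $\xi$. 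Dividing by $(e^{-c|\xi|t}+e^{-c|\xi|(T-t)})/(1+|\xi|)^3$ and taking $L^\infty$ in $\xi$ then yields the claimed norm estimate.
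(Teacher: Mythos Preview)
Your proof is correct and follows essentially the same approach as the paper: a pointwise-in-$\xi$ estimate obtained by inserting the defining bound on $|\hat f(\xi,s)|$ and splitting the resulting integral into the two exponential pieces. The only cosmetic difference is that you evaluate the two integrals exactly, whereas the paper factors out $e^{-c|\xi|t}$ (resp.\ $e^{-c|\xi|(T-t)}$) first and then bounds the remaining integral $\int_0^t |\xi|e^{-c|\xi|(t-s)}\,ds$ by a constant; both arguments yield the same conclusion with the same mechanism.
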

\begin{proof}
  We check that
  \begin{eqnarray*}
    &  & \left| \int_0^t e^{- 2 c | \xi | (t - s)} | \xi | \hat{f} (\xi, s) d
    s \right| (1 + | \xi |)^3\\
    & \leqslant & \| \hat{f} \| \int_0^t e^{- 2 c | \xi | (t - s)} | \xi |
    (e^{- c | \xi | s} + e^{- c | \xi | (T - s)}) d s.
  \end{eqnarray*}
  Now, we compute that
  \[ \int_0^t e^{- 2 c | \xi | (t - s)} | \xi | e^{- c | \xi | s} d s
     \leqslant e^{- c | \xi | t} \int_0^t e^{- c | \xi | (t - s)} | \xi | d s
     \leqslant K e^{- c | \xi | t} \]
  and
  \[ \int_0^t e^{- 2 c | \xi | (t - s)} | \xi | e^{- c | \xi | (T - s)} d s
     \leqslant e^{- c | \xi | (T - t)} \int_0^t e^{- c | \xi | (t - s)} | \xi
     | d s \leqslant e^{- c | \xi | (T - t)}, \]
  which concludes the proof of this lemma.
\end{proof}
\subsection{Conclusion of the fixed point argument}
\subsubsection{Estimates on $\| \hat{\psi} \| + \| \widehat{\nabla \varphi}
\|$ by $\| \xi \hat{u} \| + \| \xi \hat{v} \|$}
We recall that the norm $\| . \|$ is defined in subsection \ref{lane8}.
\begin{lemma}
  Suppose that $\bar{\rho} < \frac{\rho_m}{2}$. Then, there exists a constant
  $K > 0$ such that
  \[ \| \hat{\psi} \| + \| \widehat{\nabla \varphi} \| \leqslant K (\| \xi
     \hat{u} \| + \| \xi \hat{v} \|) \]
  if
  \[ \left(\begin{array}{c}
       \hat{\psi}\\
       \hat{\varphi}
     \end{array}\right) (\xi, t) = P \left(\begin{array}{c}
       \hat{u}\\
       \hat{v}
     \end{array}\right) (\xi, t) \]
  where $P$ is defined in Lemma \ref{etiq}.
\end{lemma}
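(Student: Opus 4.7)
My plan is to reduce this to a purely pointwise (in $\xi$, $t$) estimate on the matrix $P$, and then observe that the norm $\|\cdot\|$ is built by taking a pointwise supremum so the pointwise bound transfers immediately.

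The first step is to write out explicitly the components of the product from Lemma \ref{etiq}:
\[ \hat{\psi}(\xi,t) = \frac{f(\bar{\rho})}{2}(\theta(\xi) + 2 i \bar{\rho} \xi_1)\hat{u}(\xi,t) + \frac{f(\bar{\rho})}{2}(-\theta(\xi) + 2 i \bar{\rho} \xi_1)\hat{v}(\xi,t), \]
\[ \hat{\varphi}(\xi,t) = \hat{u}(\xi,t) + \hat{v}(\xi,t), \qquad \widehat{\nabla\varphi}(\xi,t) = i\xi(\hat{u}+\hat{v})(\xi,t). \]
For the $\widehat{\nabla\varphi}$ part the bound is immediate: $|\widehat{\nabla\varphi}(\xi,t)| \leqslant |\xi\hat{u}(\xi,t)| + |\xi\hat{v}(\xi,t)|$ pointwise.

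The second step is to control the entries of $P$ in terms of $|\xi|$. Here is where the assumption $\bar{\rho} < \rho_m/2$ enters: it ensures, via (\ref{thetaEq}) and (\ref{rnr}), that $0 \leqslant \theta(\xi) \leqslant |\xi|/c$. Combined with the trivial bound $|\xi_1| \leqslant |\xi|$, this gives $|\theta(\xi) \pm 2 i \bar{\rho}\xi_1| \leqslant K|\xi|$, hence pointwise
\[ |\hat{\psi}(\xi,t)| \leqslant K|\xi|(|\hat{u}(\xi,t)| + |\hat{v}(\xi,t)|) = K(|\xi\hat{u}(\xi,t)| + |\xi\hat{v}(\xi,t)|). \]

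The third step is to multiply both pointwise inequalities by the weight $(1+|\xi|)^3/(e^{-c|\xi|t}+e^{-c|\xi|(T-t)})$, take the $L^{\infty}(\mathbb{R}^2)$ supremum in $\xi$ (and implicitly the worst $t\in[0,T]$ hidden in the denominator), and sum, to conclude
\[ \|\hat{\psi}\| + \|\widehat{\nabla\varphi}\| \leqslant K(\|\xi\hat{u}\| + \|\xi\hat{v}\|). \]

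I do not expect any real obstacle: the only place where something could fail is step two, and that is precisely the place where the monotonicity threshold $\bar{\rho} < \rho_m/2$ is used so as to avoid a vanishing or degenerate $\theta(\xi)$ — but this is exactly the regime of the proposition. Everything else is algebraic expansion and a pointwise-to-norm passage.
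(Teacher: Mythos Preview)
Your proposal is correct and follows essentially the same approach as the paper: write out the entries of $P$ explicitly, use the bound $\theta(\xi)\leqslant |\xi|/c$ from (\ref{rnr}) together with $|\xi_1|\leqslant|\xi|$ to get the pointwise inequalities $|\hat\psi|\leqslant K(|\xi\hat u|+|\xi\hat v|)$ and $|\widehat{\nabla\varphi}|\leqslant |\xi\hat u|+|\xi\hat v|$, and then pass to the weighted $L^\infty$ norm. Your write-up is in fact slightly more explicit than the paper's, which simply records the two pointwise bounds and declares the lemma proved.
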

\begin{proof}
  We recall that
  \[ P = \left(\begin{array}{cc}
       \frac{f (\bar{\rho})}{2} (\theta (\xi) + 2 i \bar{\rho} \xi_1) &
       \frac{f (\bar{\rho})}{2} (- \theta (\xi) + 2 i \bar{\rho} \xi_1)\\
       1 & 1
     \end{array}\right) \]
  and therefore, with (\ref{rnr}) we have
  \[ | \hat{\psi} | \leqslant K (| \xi \hat{u} | + | \xi \hat{v} |) \]
  and
  \[ | \hat{\varphi} | \leqslant K (| \hat{u} | + | \hat{v} |), \]
  concluding the proof of the lemma.
\end{proof}
\subsubsection{Estimates on the nonlinear parts}
\begin{lemma}
  Suppose that $\bar{\rho} < \frac{\rho_m}{2}$. Then, there exists $c, K > 0$
  such that, for any $T > 0$ and any $\psi, \nabla \varphi$ such that $\|
  \hat{\psi} \|, \| \widehat{\nabla \varphi} \| < + \infty$, we have
  \[ \| S_1 (\hat{\psi}, \hat{\varphi}) \| + \| S_2 (\hat{\psi},
     \hat{\varphi}) \| \leqslant K (\| \hat{\psi} \| + \| \widehat{\nabla
     \varphi} \|)^2 (1 + \| \hat{\psi} \| + \| \widehat{\nabla \varphi} \|)^2
     . \]
  As such, the function $S_1, S_2$ are contractions on the space of functions
  $\psi, \nabla \varphi$ such that $\| \hat{\psi} \| + \| \widehat{\nabla
  \varphi} \| \leqslant \varepsilon_0$ for some small universal constant
  $\varepsilon_0$.
\end{lemma}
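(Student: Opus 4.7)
The plan is to reduce the estimate on $\|S_1\|+\|S_2\|$ to a combination of the bilinear bound of Lemma \ref{themusic} with the time-integral bound of Lemma \ref{gettingstronger}. First I would exploit the structure of $|\xi|P^{-1}$. Since $\theta(\xi) \geq c|\xi|$ under the hypothesis $\bar{\rho}<\rho_m/2$, the first column of $|\xi|P^{-1}$ is uniformly bounded while the second is $O(|\xi|)$. Combined with the bound $|e^{\lambda_j(\xi)(t-s)}| \leq e^{-c|\xi|(t-s)}$, the integrand of $S_1$ (and, after the change of variable $\tau = T-s$, that of $S_2$) is bounded pointwise by
\[
e^{-c|\xi|(t-s)} \bigl( K|\widehat{\tmop{NL}_1}(\xi,s)| + K|\xi|\,|\widehat{\tmop{NL}_2}(\xi,s)| \bigr).
\]

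Next I would decompose the nonlinearities. Each term in $\tmop{NL}_1$ carries an overall divergence, so after expanding the polynomial coefficient it is a finite sum of terms $\partial_x(\psi^k)$ and $\nabla\cdot(\psi^k \nabla\varphi)$ for $k=1,2,3$. Thus $\widehat{\tmop{NL}_1} = |\xi|\,\hat F_1$ where $\hat F_1$ is a finite sum of Fourier transforms of polynomial products in $(\psi,\nabla\varphi)$ of total degrees $2,3,4$. Similarly, $\tmop{NL}_2$ is a finite sum of monomials in $\psi$, $\nabla\varphi$ of degrees $2,3,4$ (after viewing $\partial_x\varphi$ as a component of $\nabla\varphi$). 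Iterating Lemma \ref{themusic} on each monomial yields
\[
\|\hat F_1\| + \|\widehat{\tmop{NL}_2}\| \leq K \bigl(\|\hat\psi\| + \|\widehat{\nabla\varphi}\|\bigr)^2 \bigl(1 + \|\hat\psi\| + \|\widehat{\nabla\varphi}\|\bigr)^2,
\]
since the quadratic terms dominate at small amplitude and the cubic/quartic ones inflate by at most a factor $(1+(\cdot))^2$.

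Combining these two points, the $S_1$ integrand can be written as $e^{-c|\xi|(t-s)}|\xi|\,\hat G(\xi,s)$ where $\|\hat G\|$ obeys the same quadratic bound. A direct application of Lemma \ref{gettingstronger} (using a smaller $c$ in the exponential than the $2c$ stated there, which only costs constants) gives the claimed bound on $\|S_1(\hat\psi,\hat\varphi)\|$. The same argument applied to the reversed integral defining $S_2$, using the symmetry of the weight $e^{-c|\xi|s}+e^{-c|\xi|(T-s)}$ under $s\mapsto T-s$ so that the norm is preserved by time reflection, produces the twin estimate for $\|S_2\|$, and summing concludes the main bound.

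For the contraction property, I would differentiate the nonlinearities using $a^k-b^k=(a-b)(a^{k-1}+\dots+b^{k-1})$ so that $\tmop{NL}_i(\psi_1,\varphi_1)-\tmop{NL}_i(\psi_2,\varphi_2)$ has the same algebraic shape as $\tmop{NL}_i$ but with one factor replaced by a difference. The same chain of arguments then delivers a Lipschitz estimate with constant $O(\varepsilon_0(1+\varepsilon_0)^2)$ on the ball $\{\|\hat\psi\| + \|\widehat{\nabla\varphi}\| \leq \varepsilon_0\}$, which is a strict contraction once $\varepsilon_0$ is small enough. The main technical difficulty I anticipate is the bookkeeping of the $|\xi|$ factors: one comes from the divergence structure of $\tmop{NL}_1$, another from the second column of $|\xi|P^{-1}$ acting on $\widehat{\tmop{NL}_2}$, and exactly one such factor must remain available at each step so that Lemma \ref{gettingstronger} absorbs it into the time integral rather than leaving an unabsorbed $|\xi|$ that would spoil the $(1+|\xi|)^3$ weight built into the norm.
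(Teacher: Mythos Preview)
Your proposal is correct and follows essentially the same route as the paper: both use the structure of $P^{-1}$ together with $\theta(\xi)\sim|\xi|$ to reduce to $|\widehat{\tmop{NL}_1}|/|\xi|+|\widehat{\tmop{NL}_2}|$, exploit the divergence form of $\tmop{NL}_1$ to strip the extra $|\xi|$, bound the resulting products via Lemma~\ref{themusic}, and then absorb the remaining $|\xi|$ and exponential with Lemma~\ref{gettingstronger}. Your treatment is in fact slightly more explicit than the paper's on two points (the handling of $S_2$ by time reflection, and the Lipschitz estimate for the contraction), but the underlying argument is the same.
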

\begin{proof}
  We recall that
  \[ S_1 (\hat{\psi}, \hat{\varphi}) (\xi, t) = \int_0^t e^{\lambda_1 (\xi) (t
     - s)} | \xi | \left( P^{- 1} \left(\begin{array}{c}
       \widehat{\tmop{NL}_1} (\psi, \varphi)\\
       \widehat{\tmop{NL}_2} (\psi, \varphi)
     \end{array}\right) (\xi, s) . \left(\begin{array}{c}
       1\\
       0
     \end{array}\right) \right) d s. \]
  By Lemma \ref{gettingstronger} we have, for $c > 0$ small enough that
  \[ \| S_1 (\hat{\psi}, \hat{\varphi}) \| \leqslant K \left\| P^{- 1}
     \left(\begin{array}{c}
       \widehat{\tmop{NL}_1} (\psi, \varphi)\\
       \widehat{\tmop{NL}_2} (\psi, \varphi)
     \end{array}\right) . \left(\begin{array}{c}
       1\\
       0
     \end{array}\right) \right\| . \]
  We recall that
  \[ P^{- 1} = \left(\begin{array}{cc}
       \frac{1}{f (\bar{\rho}) \theta (\xi)} & \frac{\theta (\xi) - 2 i
       \bar{\rho} \xi_1}{\theta (\xi)}\\
       \frac{- 1}{f (\bar{\rho}) \theta (\xi)} & \frac{\theta (\xi) + 2 i
       \bar{\rho} \xi_1}{\theta (\xi)}
     \end{array}\right) \]
  and
  \begin{eqnarray*}
    \tmop{NL}_1 (\psi, \varphi) & = & \left( \frac{\bar{\rho}}{f (\bar{\rho})}
    - 2 \right) \partial_x (\psi^2) + \frac{1}{f (\bar{\rho})} \partial_x
    (\psi^3)\\
    & + & \nabla . ((\psi f^2 (\bar{\rho}) - 2 f (\bar{\rho}) (\bar{\rho} +
    \psi) \psi + (\bar{\rho} + \psi) \psi^2) \nabla \varphi) .
  \end{eqnarray*}
  A key remark here is that all the terms $\tmop{NL}_1 (\psi, \varphi)$ are
  the derivative of some quantities, and thus we check that
  \[ \frac{| \widehat{\tmop{NL}_1} (\psi, \varphi) | (\xi, t)}{| \xi |}
     \leqslant K (| \widehat{(\psi^2)} | + | \widehat{(\psi^3)} | + |
     \widehat{(\psi^2 \nabla \varphi)} | + | \widehat{(\psi^3 \nabla \varphi)}
     |) . \]
  We recall that
  \begin{eqnarray*}
    \tmop{NL}_2 (\psi, \varphi) & = & \frac{1}{2} f^2 (\bar{\rho}) | \nabla
    \varphi |^2 + \frac{1}{2} (\psi^2 - 2 f (\bar{\rho}) \psi) | \nabla
    \varphi |^2\\
    & + & \frac{1}{2 f (\bar{\rho})} \psi^2 + \partial_x \varphi \left(
    \frac{1}{f (\bar{\rho})} \psi^2 - 2 \psi \right)
  \end{eqnarray*}
  and thus
  \begin{eqnarray*}
    | \widehat{\tmop{NL}_2} (\psi, \varphi) | (\xi, t) & \leqslant & K (|
    \widehat{(| \nabla \varphi |^2)} | + | \widehat{(\psi | \nabla \varphi
    |^2)} | + | \widehat{(\psi^2 | \nabla \varphi |^2)} |)\\
    & + & K (| \widehat{(\psi^2)} | + | \widehat{(\partial_x \varphi \psi)} |
    + | \widehat{(\partial_x \varphi \psi^2)} |) .
  \end{eqnarray*}
  Now, we check easily that if $\bar{\rho} < \frac{\rho_m}{2}$, then
  \[ \left| P^{- 1} \left(\begin{array}{c}
       \widehat{\tmop{NL}_1} (\psi, \varphi)\\
       \widehat{\tmop{NL}_2} (\psi, \varphi)
     \end{array}\right) \right| \leqslant K \left( \frac{|
     \widehat{\tmop{NL}_1} (\psi, \varphi) |}{| \xi |} + |
     \widehat{\tmop{NL}_2} (\psi, \varphi) | \right) \]
  and we can then conclude using Lemma \ref{themusic}. A similar proof holds
  for $S_2 (\hat{\psi}, \hat{\varphi})$.
\end{proof}
\subsubsection{End of the proof of Theorem \ref{aiaiaia}}
\begin{proof}
  Take an intial data that satisfies
  \[ \| (1 + | \xi |)^3 \hat{\psi}_0 \|_{L^{\infty} (\mathbb{R}^2)} + \| (1 +
     | \xi |)^3 \widehat{\nabla \varphi_T} \|_{L^{\infty} (\mathbb{R}^2)}
     \leqslant \varepsilon \]
  for some small $\varepsilon_0$ determined later on. Then, we can construct
  by Proposition \ref{nina} and Lemma \ref{rezident} a solution $\Psi, \Phi$
  to the linear problem that satisfy
  \[ \| \hat{\Psi} \| + \| \widehat{\nabla \Phi} \| \leqslant K \varepsilon .
  \]
  By the fixed point Theorem, we deduce that taking $\varepsilon$ small
  enough, we can construct a solution of the full nonlinear problem $\psi,
  \varphi$, satisfying
  \[ \| \hat{\psi} \| + \| \widehat{\nabla \varphi} \| \leqslant K \varepsilon
     . \]
  Finally, Theorem \ref{aiaiaia} is more precise than this since we want to
  show that
  \[ (\| \psi \|_{L^2} + \| \nabla \varphi \|_{L^2}) (t) \leqslant K \left(
     \frac{\| (1 + | \xi |)^3 \hat{\psi}_0 \|_{L^{\infty} (\mathbb{R}^2)}}{(1
     + t)} + \frac{\| (1 + | \xi |)^3 \widehat{\nabla \varphi_T}
     \|_{L^{\infty} (\mathbb{R}^2)}}{(1 + T - t)} \right) \]
  for instance and not simply
  \[ (\| \psi \|_{L^2} + \| \nabla \varphi \|_{L^2}) (t) \leqslant K
     \varepsilon \left( \frac{1}{(1 + t)} + \frac{1}{(1 + T - t)} \right) \]
  which is a direct consequence of $\| \hat{\psi} \| + \| \widehat{\nabla
  \varphi} \| \leqslant K \varepsilon .$ To do so, we simply check that in
  Lemma \ref{themusic} we have that for $\varepsilon_1, \varepsilon_2 > 0$,
  \begin{eqnarray*}
    &  & (\varepsilon_1 e^{- c | \nu | t} + \varepsilon_2 e^{- c | \nu | (T -
    t)}) (\varepsilon_1 e^{- c | \xi - \nu | t} + \varepsilon_2 e^{- c | \xi -
    \nu | (T - t)})\\
    & \leqslant & K \min (\varepsilon_1, \varepsilon_2) (\varepsilon_1 e^{- c
    | \xi | t} + \varepsilon_2 e^{- c | \xi | (T - t)})
  \end{eqnarray*}
  and in Lemma \ref{gettingstronger} the contributions of $e^{- c | \nu | t}$
  and $e^{- c | \nu | (T - t)}$ are treated separately.
\end{proof}
\section{Proof of Theorem \ref{voices} and Lemma \ref{vortex}}\label{letmego}
\subsection{Existence and estimates of solutions for $\sigma > 0$}
Here, we prove the following first part of Theorem \ref{voices}.
\begin{lemma}
  Suppose that $\bar{\rho} < \frac{\rho_m}{2}$. Then, for any $\sigma > 0$, there exists $K, \varepsilon_0 > 0$ such that equation
  (\ref{mq1})-(\ref{mq2}) for an initial data $\psi (x, y, 0) = \psi_0 (x, y)$
  and $\varphi (x, y, T) = \varphi_T (x, y)$ \ with
  \[ \| (1 + | \xi |)^3 \hat{\psi}_0 \|_{L^{\infty} (\mathbb{R}^2)} + \| (1 +
     | \xi |)^3 (| \xi | + \sigma | \xi |^2) \widehat{\varphi_T}
     \|_{L^{\infty} (\mathbb{R}^2)} \leqslant \varepsilon_0 \]
  admits a solution ($\psi_{\sigma}, \varphi_{\sigma}$) in $L^2 \cap
  L^{\infty}$ on $[0, T]$ and this solution satisfies for all $t \in [0, T]$
  that
  \begin{eqnarray*}
    &  & (\| \psi_{\sigma} \|_{L^2} + \| \nabla \varphi_{\sigma} \|_{L^2} +
    \sigma \| \nabla^2 \varphi_{\sigma} \|_{L^2}) (t)\\
    & \leqslant & K \left( \frac{\| (1 + | \xi |)^3(1+\sigma|\xi|) \hat{\psi}_0
    \|_{L^{\infty} (\mathbb{R}^2)}}{(1 + t)} + \frac{\| (1 + | \xi |)^3 (| \xi
    | + \sigma | \xi |^2) \widehat{\varphi_T} \|_{L^{\infty}
    (\mathbb{R}^2)}}{(1 + T - t)} \right)
  \end{eqnarray*}
  and
  \begin{eqnarray*}
    &  & (\| \psi_{\sigma} \|_{L^{\infty}} + \| \nabla \varphi_{\sigma}
    \|_{L^{\infty}} + \sigma \| \nabla^2 \varphi_{\sigma} \|_{L^{\infty}})
    (t)\\
    & \leqslant & K \left( \frac{\| (1 + | \xi |)^3 (1+\sigma|\xi|)\hat{\psi}_0
    \|_{L^{\infty} (\mathbb{R}^2)}}{(1 + t)^2} + \frac{\| (1 + | \xi |)^3 (|
    \xi | + \sigma | \xi |^2) \widehat{\varphi_T} \|_{L^{\infty}
    (\mathbb{R}^2)}}{(1 + T - t)^2} \right) .
  \end{eqnarray*}
\end{lemma}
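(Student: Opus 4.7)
The plan is to transpose the scheme of Theorem \ref{aiaiaia} to the viscous setting: Fourier diagonalise the viscous linear operator, invert it as in Lemma \ref{london}, and close a Banach fixed point in a weighted Fourier $L^\infty$ norm adapted to keep track of the second derivative of $\varphi_\sigma$. In Fourier the symbol becomes $A_\sigma(\xi) = A(\xi) + \mathrm{diag}(-\sigma|\xi|^2, +\sigma|\xi|^2)$, whose trace $2i(f(\bar\rho)-\bar\rho)\xi_1$ is unchanged from the inviscid case while the discriminant is
\begin{equation*}
\mu_\sigma^2(\xi) := \mathrm{tr}(A_\sigma)^2 - 4\det(A_\sigma) = \theta(\xi)^2 + 4\sigma^2|\xi|^4 + 8i\sigma\bar{\rho}|\xi|^2\xi_1.
\end{equation*}
Its real part is positive of size $\theta^2 + \sigma^2|\xi|^4$, so the principal branch satisfies $\mathfrak{Re}\,\mu_\sigma(\xi) \geq c(|\xi| + \sigma|\xi|^2)$ under $\bar\rho < \rho_m/2$. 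Repeating Lemmas \ref{etiq}, \ref{london} and \ref{rezident} with $\lambda^\sigma_\pm(\xi) = i(f(\bar\rho)-\bar\rho)\xi_1 \pm \mu_\sigma(\xi)/2$ and the corresponding eigenvectors $P_\sigma$, I obtain a linear solution operator that satisfies, uniformly in $\sigma, T$,
\begin{equation*}
\bigl(|\hat\psi_{\mathrm{lin}}| + |\xi|\,|\hat\varphi_{\mathrm{lin}}|\bigr)(\xi,t) \leq K\bigl(e^{-c(|\xi|+\sigma|\xi|^2)t}|\hat\psi_0| + e^{-c(|\xi|+\sigma|\xi|^2)(T-t)}|\xi\,\hat\varphi_T|\bigr),
\end{equation*}
strictly stronger than its inviscid analogue; the compatibility determinant $D_\sigma(\xi)$ stays bounded below in modulus since $\mathfrak{Re}\,\mu_\sigma > 0$.

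Next I introduce the viscous analogue of the norm of Section \ref{lane8},
\begin{equation*}
\|\hat{f}\|_\sigma := \left\|\frac{(1+|\xi|)^3(1+\sigma|\xi|)\,\hat{f}(\xi,t)}{e^{-c(|\xi|+\sigma|\xi|^2)t} + e^{-c(|\xi|+\sigma|\xi|^2)(T-t)}}\right\|_{L^\infty(\mathbb{R}^2\times[0,T])}.
\end{equation*}
The extra factor $(1+\sigma|\xi|)$ is exactly what matches the hypothesis $\|(1+|\xi|)^3(1+\sigma|\xi|)\hat\psi_0\|_{L^\infty} + \|(1+|\xi|)^3(1+\sigma|\xi|)\widehat{\nabla\varphi_T}\|_{L^\infty} \leq \varepsilon_0$ (noting that $(|\xi|+\sigma|\xi|^2)\widehat{\varphi_T} = (1+\sigma|\xi|)\widehat{\nabla\varphi_T}$), and a uniform bound on $\|\widehat{\nabla\varphi_\sigma}\|_\sigma$ immediately controls $\sigma\nabla^2\varphi_\sigma$ in both $L^2$ and $L^\infty$. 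The convolution estimate of Lemma \ref{themusic} persists because $(1+\sigma|\xi|) \leq (1+\sigma|\nu|)(1+\sigma|\xi-\nu|)$ and the exponential triangle inequality is unaffected by the added $\sigma|\xi|^2$ term in the exponent; the time-integral estimate of Lemma \ref{gettingstronger} is only strengthened, as $\mathfrak{Re}(-\lambda^\sigma)\geq c(|\xi|+\sigma|\xi|^2)$.

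With these ingredients in place, the Duhamel operators $S^\sigma_1, S^\sigma_2$ defined as in Section \ref{embrz} but with the viscous spectral data $(\lambda^\sigma_\pm, P_\sigma)$ are shown to be contractions on a small ball of the $\|\cdot\|_\sigma$-norm; I use crucially that $\mathrm{NL}_1$ is an entire spatial divergence (so the $|\xi|$ factor coming out of $P_\sigma^{-1}$ on the first coordinate is absorbed) and that $\mathrm{NL}_2$ is only quadratic plus cubic in $(\psi,\nabla\varphi)$. A fixed point then produces the unique solution $(\psi_\sigma, \varphi_\sigma)$, with $\|\hat\psi_\sigma\|_\sigma + \|\widehat{\nabla\varphi_\sigma}\|_\sigma \leq K\varepsilon_0$. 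The quantitative $L^2$ and $L^\infty$ decays claimed in the lemma for $\psi_\sigma$, $\nabla\varphi_\sigma$ and $\sigma\nabla^2\varphi_\sigma$ are then read off from the $\|\cdot\|_\sigma$-bound exactly as in the inviscid proof, by dominating $e^{-c(|\xi|+\sigma|\xi|^2)t} \leq e^{-c|\xi|t}$ and integrating the envelope $(1+|\xi|)^{-3}$ against $\mathbb{R}^2$.

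The main obstacle is to make every constant uniform in $\sigma > 0$ and in $T$: any attempt to estimate $\sigma\nabla^2\varphi_\sigma$ by $\sigma$ times a naive second-derivative bound of the linearised solution would lose in the regime $\sigma \to 0$. This is precisely why the composite weight $(1+|\xi|)^3(1+\sigma|\xi|)$ is carried through the entire argument rather than being decomposed: the extra $\sigma|\xi|$ factor is produced neither by the linear propagator nor by the nonlinearity, it simply travels with the initial data. The elementary bound $\sigma|\xi|^2 e^{-c\sigma|\xi|^2 t} \leq K/t$, used to split low and high frequencies at $|\xi|\sim 1/\sigma$, guarantees that the viscous smoothing exactly compensates any factor of $\sigma|\xi|$ appearing in the Duhamel analysis, and provides the uniformity required for the vanishing viscosity statement obtained as the second part of Theorem \ref{voices}.
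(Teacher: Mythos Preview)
Your overall strategy mirrors the paper's, but there is a genuine gap in the convolution step. You assert that Lemma \ref{themusic} persists for $\|\cdot\|_\sigma$ because ``the exponential triangle inequality is unaffected by the added $\sigma|\xi|^2$ term in the exponent''. This is false: the map $\xi\mapsto|\xi|^2$ is not subadditive. Taking $\nu=\xi/2$ one has $|\nu|^2+|\xi-\nu|^2=\tfrac12|\xi|^2<|\xi|^2$, so
\[
\frac{e^{-c(|\nu|+\sigma|\nu|^2)t}\,e^{-c(|\xi-\nu|+\sigma|\xi-\nu|^2)t}}{e^{-c(|\xi|+\sigma|\xi|^2)t}}
= e^{\,c\sigma(|\xi|^2-|\nu|^2-|\xi-\nu|^2)t}=e^{c\sigma|\xi|^2 t/2},
\]
which is unbounded in $\xi$ and $t$. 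Hence $\|\widehat{fg}\|_\sigma\le K\|\hat f\|_\sigma\|\hat g\|_\sigma$ fails, and the fixed point does not close in the norm you propose.

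The paper flags exactly this obstruction and avoids it by keeping the \emph{same} $\sigma$-independent norm $\|\cdot\|$ of Section \ref{lane8}, with denominator $e^{-c|\xi|t}+e^{-c|\xi|(T-t)}$. The stronger viscous decay $e^{-c(|\xi|+\sigma|\xi|^2)t}$ is used only in the linear pointwise estimate and in the analogue of Lemma \ref{gettingstronger}, and is immediately weakened to $e^{-c|\xi|t}$ before any convolution is taken. Your submultiplicative bound $(1+\sigma|\xi|)\le(1+\sigma|\nu|)(1+\sigma|\xi-\nu|)$ for the polynomial weight is correct and harmless; it is the Gaussian-type exponential in the denominator that must be dropped. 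With that change the control of $\sigma\nabla^2\varphi_\sigma$ does not come from the exponential envelope but from the structure of $P_\sigma$: its first-row entries have size $|\xi|+\sigma|\xi|^2$, so the linear estimate already yields $(|\xi|+\sigma|\xi|^2)|\hat\varphi_\sigma|$ on the left, and this propagates through the Duhamel iteration in the unchanged norm $\|\cdot\|$.
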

\begin{proof}
  We write the system (\ref{mq1})-(\ref{mq2}) (now for $\sigma \neq 0$) in
  Fourier in the form
  \[ \partial_t \left(\begin{array}{c}
       \hat{\psi}_{\sigma}\\
       \hat{\varphi}_{\sigma}
     \end{array}\right) = A_{\sigma} (\xi) \left(\begin{array}{c}
       \hat{\psi}_{\sigma}\\
       \hat{\varphi}_{\sigma}
     \end{array}\right) + \left(\begin{array}{c}
       \widehat{\tmop{NL}_1} (\psi_{\sigma}, \varphi_{\sigma})\\
       \widehat{\tmop{NL}_2} (\psi_{\sigma}, \varphi_{\sigma})
     \end{array}\right), \]
  where $\tmop{NL}_1, \tmop{NL}_2$ are the same terms as in subsection
  \ref{embrz}, and
  \[ A_{\sigma} (\xi) \assign \left(\begin{array}{cc}
       i (f (\bar{\rho}) - 2 \bar{\rho}) \xi_1 - \sigma | \xi |^2 & -
       \bar{\rho} f^2 (\bar{\rho}) | \xi |^2\\
       \frac{- 1}{f (\bar{\rho})} & i f (\bar{\rho}) \xi_1 + \sigma | \xi |^2
     \end{array}\right) . \]
  We check by explicit computation that the eigenvalues of $A_{\sigma} (\xi)$
  are
  \[ \lambda_{1, \sigma} (\xi) \assign \frac{1}{2} (- \theta_{\sigma} (\xi) +
     2 i (f (\bar{\rho}) - \bar{\rho}) \xi_1) \]
  and
  \[ \lambda_{2, \sigma} (\xi) \assign \frac{1}{2} (\theta_{\sigma} (\xi) + 2
     i (f (\bar{\rho}) - \bar{\rho}) \xi_1), \]
  where
  \[ \theta_{\sigma} (\xi) \assign 2 \sqrt{f (\bar{\rho}) \bar{\rho} | \xi |^2
     - (\bar{\rho} \xi_1 - i \sigma | \xi |^2)^2} . \]
  Furthermore, we have
  \[ A_{\sigma} (\xi) = P_{\sigma} \left(\begin{array}{cc}
       \lambda_{1, \sigma} (\xi) & 0\\
       0 & \lambda_{2, \sigma} (\xi)
     \end{array}\right) P^{- 1}_{\sigma} \]
  where
  \[ P_{\sigma} \assign \left(\begin{array}{cc}
       f (\bar{\rho}) \left( \frac{\theta_{\sigma} (\xi)}{2} + i \bar{\rho}
       \xi_1 + \sigma | \xi |^2 \right) & f (\bar{\rho}) \left( -
       \frac{\theta_{\sigma} (\xi)}{2} + i \bar{\rho} \xi_1 + \sigma | \xi |^2
       \right)\\
       1 & 1
     \end{array}\right) \]
  and
  \[ P_{\sigma}^{- 1} = \left(\begin{array}{cc}
       \frac{1}{f (\bar{\rho}) \theta_{\sigma} (\xi)} & \frac{\theta_{\sigma}
       (\xi) / 2 - i \bar{\rho} \xi_1 - \sigma | \xi |^2}{\theta_{\sigma}
       (\xi)}\\
       \frac{- 1}{f (\bar{\rho}) \theta_{\sigma} (\xi)} &
       \frac{\theta_{\sigma} (\xi) / 2 + i \bar{\rho} \xi_1 + \sigma | \xi
       |^2}{\theta_{\sigma} (\xi)}
     \end{array}\right) . \]
  Now, we check that
  \[ \theta_{\sigma} (\xi) = 2 \sqrt{f (\bar{\rho}) \bar{\rho} | \xi |^2 -
     \bar{\rho}^2 \xi_1^2 + \sigma^2 | \xi |^4 + 2 i \sigma \bar{\rho} \xi_1 |
     \xi |^2} \]
  and in particular, if $\bar{\rho} < \frac{\rho_m}{2}$, $\sigma > 0$, there
  exists $c > 0$ such that
  \[ \frac{1}{c} (| \xi | + \sigma | \xi |^2) \geqslant
     \mathfrak{R}\mathfrak{e} (\theta_{\sigma} (\xi)) \geqslant c (| \xi | +
     \sigma | \xi |^2) . \]
  We will follow a similar proof as the one of Theorem \ref{aiaiaia}.
  Concerning the linear problem, we define similarly
  \[ \left(\begin{array}{c}
       \hat{u}_{\sigma}\\
       \hat{v}_{\sigma}
     \end{array}\right) = P_{\sigma}^{- 1} \left(\begin{array}{c}
       \hat{\psi}_{\sigma}\\
       \hat{\varphi}_{\sigma}
     \end{array}\right) \]
  and we check, with similar computations as Lemmas \ref{london} that
  \begin{eqnarray*}
    &  & \left(\begin{array}{cc}
      f (\bar{\rho}) (\theta_{\sigma} / 2 + i \bar{\rho} \xi_1 + \sigma | \xi
      |^2) & - e^{- \lambda_{2, \sigma} T} f (\bar{\rho}) (\theta_{\sigma} / 2
      - i \bar{\rho} \xi_1 - \sigma | \xi |^2)\\
      e^{\lambda_{1, \sigma} T} & 1
    \end{array}\right) \left(\begin{array}{c}
      \hat{u}_{\sigma} (\xi, 0)\\
      \hat{v}_{\sigma} (\xi, T)
    \end{array}\right)\\
    & = & \left(\begin{array}{c}
      \hat{\psi}_0 (\xi)\\
      \hat{\varphi}_T (\xi)
    \end{array}\right) .
  \end{eqnarray*}
  We have
  \begin{eqnarray*}
    &  & D_{\sigma} (\xi)\\
    & \assign & \tmop{Det} \left(\begin{array}{cc}
      f (\bar{\rho}) (\theta_{\sigma} / 2 + i \bar{\rho} \xi_1 + \sigma | \xi
      |^2) & - e^{- \lambda_{2, \sigma} T} f (\bar{\rho}) (\theta_{\sigma} / 2
      - i \bar{\rho} \xi_1 - \sigma | \xi |^2)\\
      e^{\lambda_{1, \sigma} T} & 1
    \end{array}\right)\\
    & = & f (\bar{\rho}) \left( \frac{\theta_{\sigma}}{2} (1 + e^{-
    \theta_{\sigma} (\xi) T}) + \sigma | \xi |^2 (1 - e^{- \theta_{\sigma}
    (\xi) T}) + i \bar{\rho} \xi_1 (1 - e^{- \theta_{\sigma} (\xi) T})
    \right),
  \end{eqnarray*}
  and we check that for $\xi \neq 0$, this quantity is not $0$. We deduce that
  \begin{eqnarray*}
    \hat{u}_{\sigma} (\xi, 0) & = & \frac{\hat{\psi}_0 (\xi) + e^{-
    \lambda_{2, \sigma} T} f (\bar{\rho}) (\theta_{\sigma} / 2 - i \bar{\rho}
    \xi_1 - \sigma | \xi |^2) \hat{\varphi}_T (\xi)}{D_{\sigma} (\xi)}\\
    \hat{v}_{\sigma} (\xi, T) & = & \frac{- e^{\lambda_{1, \sigma} T}
    \hat{\psi}_0 (\xi) + f (\bar{\rho}) (\theta_{\sigma} / 2 + i \bar{\rho}
    \xi_1 + \sigma | \xi |^2) \hat{\varphi}_T (\xi)}{D_{\sigma} (\xi)}
  \end{eqnarray*}
  and
  \[ \left(\begin{array}{c}
       \hat{\psi}_{\sigma}\\
       \hat{\varphi}_{\sigma}
     \end{array}\right) (\xi, t) = P_{\sigma} \left(\begin{array}{c}
       e^{\lambda_{1, \sigma} (\xi) t} \hat{u}_{\sigma} (\xi, 0)\\
       e^{- \lambda_{2, \sigma} (\xi) (T - t)} \hat{v}_{\sigma} (\xi, T)
     \end{array}\right) . \]
  First, we check that there exists $\kappa > 0$ small such that
  \[ | D_{\sigma} (\xi) | \geqslant \kappa (| \xi | + \sigma | \xi |^2) . \]
  From this, we deduce the following estimate for the linear problem:
  \begin{eqnarray*}
    &  & (| \hat{\psi}_{\sigma} | + (| \xi | + \sigma | \xi |^2) |
    \hat{\varphi}_{\sigma} |) (\xi, t)\\
    & \leqslant & K (e^{- c (| \xi | + \sigma | \xi |^2) t} | \hat{\psi}_0 |
    + e^{- c (| \xi | + \sigma | \xi |^2) (T - t)} (| \xi | + \sigma | \xi
    |^2) | \hat{\varphi}_T |) .
  \end{eqnarray*}
  Now, we check, with a similar proof, that the following variation of Lemma
  \ref{gettingstronger} holds:
  \[ \left\| \int_0^t e^{- 2 c (| \xi | + \sigma | \xi |^2) (t - s)} (| \xi |
     + \sigma | \xi |^2) \hat{f} (\xi, s) d s \right\| \leqslant K \| \hat{f}
     \| . \]
  We can then conclude as the end of the proof of Theorem \ref{aiaiaia}.
  Remark that we did not change the norm $\| . \|$ here, it does not depend on
  $\sigma$. If we tried to bootstrap the fact that $\hat{f}$ decay like $e^{-
  c (| \xi | + \sigma | \xi |^2) t}$ instead of $e^{- c | \xi | t}$, the
  triangular inequality used in Lemma \ref{themusic} to estimates convolutions
  with the norm $\| . \|$ would not work.
\end{proof}
\subsection{Vanishing viscosity limit for the linear problem}
We conclude here the proof of Theorem \ref{voices}.

\begin{proof}
 Denoting $\psi, \varphi$ the solution of (\ref{mq1})-(\ref{mq2}) for $\sigma = 0$ for a given
initial data $\psi_0, \varphi_T$, and $\psi_{\sigma}, \varphi_{\sigma}$ the
solution for the same equation with $\sigma > 0$ for the same initial data, we check that
they satisfy the equations
\[ \partial_t \left(\begin{array}{c}
     \hat{\psi}\\
     \hat{\varphi}
   \end{array}\right) = A (\xi) \left(\begin{array}{c}
     \hat{\psi}\\
     \hat{\varphi}
   \end{array}\right) + \left(\begin{array}{c}
     \widehat{\tmop{NL}}_1 (\psi, \varphi)\\
     \widehat{\tmop{NL}}_2 (\psi, \varphi)
   \end{array}\right) \]
and
\[ \partial_t \left(\begin{array}{c}
     \hat{\psi}_{\sigma}\\
     \hat{\varphi}_{\sigma}
   \end{array}\right) = A (\xi) \left(\begin{array}{c}
     \hat{\psi}_{\sigma}\\
     \hat{\varphi}_{\sigma}
   \end{array}\right) + \left(\begin{array}{c}
     \widehat{\tmop{NL}}_1 (\psi_{\sigma}, \varphi_{\sigma})\\
     \widehat{\tmop{NL}}_2 (\psi_{\sigma}, \varphi_{\sigma})
   \end{array}\right) + \sigma \left(\begin{array}{c}
     \widehat{\Delta \psi_{\sigma}}\\
     - \widehat{\Delta \varphi_{\sigma}}
   \end{array}\right) . \]
Therefore,
\begin{eqnarray*}
  &  & \partial_t \left(\begin{array}{c}
    \widehat{\psi_{\sigma} - \psi}\\
    \widehat{\varphi_{\sigma} - \varphi}
  \end{array}\right)\\
  & = & A (\xi) \left(\begin{array}{c}
    \widehat{\psi_{\sigma} - \psi}\\
    \widehat{\varphi_{\sigma} - \varphi}
  \end{array}\right) + \left(\begin{array}{c}
    \widehat{\tmop{NL}}_1 (\psi_{\sigma}, \varphi_{\sigma})\\
    \widehat{\tmop{NL}}_2 (\psi_{\sigma}, \varphi_{\sigma})
  \end{array}\right) - \left(\begin{array}{c}
    \widehat{\tmop{NL}}_1 (\psi, \varphi)\\
    \widehat{\tmop{NL}}_2 (\psi, \varphi)
  \end{array}\right)\\
  & + & \sigma \left(\begin{array}{c}
    \widehat{\Delta \psi_{\sigma}}\\
    - \widehat{\Delta \varphi_{\sigma}}
  \end{array}\right),
\end{eqnarray*}
and by the Duhamel formulation we write that with
\begin{equation}
  \left(\begin{array}{c}
    \widehat{\psi_{\sigma} - \psi}\\
    \widehat{\varphi_{\sigma} - \varphi}
  \end{array}\right) = P \left(\begin{array}{c}
    \hat{u}\\
    \hat{v}
  \end{array}\right), \label{solee}
\end{equation}
we have (with the same notation as in subsection \ref{embrz})
\[ \left(\begin{array}{c}
     | \xi | \hat{u}\\
     | \xi | \hat{v}
   \end{array}\right) = \left(\begin{array}{c}
     S_1 (\hat{\psi}_{\sigma}, \hat{\varphi}_{\sigma}) - S_1 (\hat{\psi},
     \hat{\varphi})\\
     S_2 (\hat{\psi}_{\sigma}, \hat{\varphi}_{\sigma}) - S_2 (\hat{\psi},
     \hat{\varphi})
   \end{array}\right) + \sigma \left(\begin{array}{c}
     \tilde{S}_1\\
     \tilde{S}_2
   \end{array}\right) \]
where
\[ \tilde{S}_1 = \int_0^t e^{\lambda_1 (\xi) (t - s)} | \xi | \left( P^{- 1}
   \left(\begin{array}{c}
     \widehat{\Delta \psi_{\sigma}}\\
     - \widehat{\Delta \varphi_{\sigma}}
   \end{array}\right) . \left(\begin{array}{c}
     1\\
     0
   \end{array}\right) \right) d s \]
and
\[ \tilde{S}_2 = \int_T^{T - t} e^{\lambda_2 (\xi) (T - t - s)} | \xi | \left(
   P^{- 1} \left(\begin{array}{c}
     \widehat{\Delta \psi_{\sigma}}\\
     - \widehat{\Delta \varphi_{\sigma}}
   \end{array}\right) . \left(\begin{array}{c}
     0\\
     1
   \end{array}\right) \right) d s. \]
Now, we define the norm
\[
\| \hat{f} \|_k \assign \left\| \frac{(1 + | \xi |)^k \hat{f}}{e^{- c | \xi
   | t} + e^{- c | \xi | (T - t)}} \right\|_{L^{\infty} (\mathbb{R}^2)}.
\]
the assumptions of Theorem \ref{aiaiaia}, then the same estimates hold for the norms $\ . \_k$ for any $k > 2$. It is noteworthy that the proofs of Theorem \ref{aiaiaia} were conducted using the norm $\ . \ _3$. We can verify that the same calculations are valid for the norms $\ . \ _k$ for any $k > 2$ (the constants will depend on the value of $k$). Consequently, if the initial data $\psi_0, \varphi_T$ meet the requirements of Theorem \ref{aiaiaia}, then the same estimates are applicable for the norms $\ . \ _k$ for any $k > 2$.
\[
\| (1 + | \xi |)^5(1+\sigma|\xi|) \hat{\psi}_0 \|_{L^{\infty} (\mathbb{R}^2)} + \| (1 + |
   \xi |)^5 (| \xi | + \sigma | \xi |^2) \hat{\varphi}_T \|_{L^{\infty}
   (\mathbb{R}^2)} \leqslant \varepsilon_0,
\]
with $\varepsilon_0$ small enough, then $\psi_{\sigma}, \varphi_{\sigma}$ the
solution of (\ref{mq1})-(\ref{mq2}) for this initial data satisfy
\[ \| (1+\sigma|\xi|)\widehat{\psi_{\sigma}} \|_5 + \| (| \xi | + \sigma | \xi |^2)
   \widehat{\varphi_{\sigma}} \|_5 \leqslant K \varepsilon_0 . \]
We now check that
\[ \| \tilde{S}_1 \|_3 + \| \tilde{S}_2 \|_3 \leqslant K (\|
   (1+\sigma|\xi|)\widehat{\psi_{\sigma}} \|_5 + \| (| \xi | + \sigma | \xi |^2)
   \widehat{\varphi_{\sigma}} \|_5) \leqslant K \varepsilon_0 . \]
From this and a fixed point argument, we
deduce that $\hat{u}, \hat{v}$ defined in (\ref{solee}) satisfy
\[ \| \hat{u} \|_3 + \| \hat{v} \|_3 \leqslant K \sigma \varepsilon_0, \]
concluding the proof.
\end{proof}
\subsection{Proof of Lemmas \ref{vortex} and \ref{light}}
\begin{proof}
  We look for a solution of
  \[ \left\{\begin{array}{l}
       \partial_t \psi = (f (\bar{\rho}) - 2 \bar{\rho}) \partial_x \psi +
       \bar{\rho} f^2 (\bar{\rho}) \Delta \varphi\\
       \partial_t \varphi = f (\bar{\rho}) \partial_x \varphi - \frac{1}{f
       (\bar{\rho})} \psi
     \end{array}\right. \]
  of the form
  \[ \psi = A \cos (a x + b y + c t), \]
  \[ \varphi = B \sin (a x + b y + c t) . \]
  Direct identification shows that this is a solution if
  \begin{eqnarray*}
    A (c - a (f (\bar{\rho}) - 2 \bar{\rho})) - B (a^2 + b^2) \bar{\rho} f^2
    (\bar{\rho})  & = & 0\\
    A \left( \frac{- 1}{f (\bar{\rho})} \right) + B (- c + f (\bar{\rho}) a) &
    = & 0.
  \end{eqnarray*}
  This system admits a non zero solution for $A, B$ if and only if
  \[ (c - a (f (\bar{\rho}) - 2 \bar{\rho})) (- c + f (\bar{\rho}) a) - (a^2 +
     b^2) \bar{\rho} f (\bar{\rho}) = 0, \]
  that we write as a polynom on $c$:
  \[ - c^2 + c (2 a (f (\bar{\rho}) - \bar{\rho})) - a^2 f (\bar{\rho}) (f
     (\bar{\rho}) - \bar{\rho}) - b^2 \bar{\rho} f (\bar{\rho}) = 0. \]
  This admits real solutions if
  \[ (2 a (f (\bar{\rho}) - \bar{\rho}))^2 - 4 (a^2 f (\bar{\rho}) (f
     (\bar{\rho}) - \bar{\rho}) + b^2 \bar{\rho} f (\bar{\rho})) \geqslant 0,
  \]
  in other words (dividing by $4 a^2$)
  \[ (f (\bar{\rho}) - \bar{\rho})^2 \geqslant \left( f (\bar{\rho}) (f
     (\bar{\rho}) - \bar{\rho}) + \frac{b^2}{a^2} \bar{\rho} f (\bar{\rho})
     \right) . \]
  We deduce that if $f (\bar{\rho}) - \bar{\rho} < 0$ (which is equivalent to
  $\bar{\rho} > \frac{\rho_m}{2}$), then we can find such a solution by taking
  $\frac{b^2}{a^2}$ small enough.
The proof of Lemma \ref{light} can be done similarly, and there we check that the condition for the existence of a non zero solution becomes
$ 4 (a^2 + b^2) \bar{\rho} f (\bar{\rho}) \geq 0$ instead, which is always satisfied.
\end{proof}

\section{Acknowledgments}
The work of the authors is supported by Tamkeen under the NYU Abu Dhabi Research Institute grant CG002 of the center SITE. Data sharing not applicable to this article as no datasets were generated or analysed during the current
study. The authors have no competing interests to declare that are relevant to the content of this article.

\bibliographystyle{unsrt}
\bibliography{Refs_GMP}
\end{document}